\newtheorem{Proposition}{Proposition}
\newtheorem{Question}{Question}
\newtheorem{Lemma}{Lemma}
\newtheorem{Theorem}{Theorem}
\newcommand{\proj}{\mathbb{P}}
\newcommand{\rarr}{\rightarrow}
\newcommand{\oh}{{\mathcal{O}}}
\newcommand{\com}{\mathbb{C}}
\newcommand{\Q}{\mathbb{Q}}
\newcommand{\lan}{\langle}
\newcommand{\ran}{\rangle}
\newcommand{\nn}{m}
\def\scup{\mathbin{\text{\scriptsize$\cup$}}}
\newcommand{\hodge}{\mathbb{E}}
\begin{document}
\baselineskip=16pt

\title{ The $\kappa$ ring of
the moduli of curves of compact type: II}
\author{R. Pandharipande}
\date{June 2009}

\begin{abstract}
The subalgebra of the tautological ring
of the moduli of 
curves of compact type
generated by the $\kappa$ classes is
studied. Relations, constructed 
via the virtual geometry of the moduli 
of stable maps, are used to prove
universality results
relating the $\kappa$ rings
in genus 0 to  higher genus.
Predictions  for $\kappa$  classes of the Gorenstein conjecture
are proven. 
\end{abstract}

\maketitle
\setcounter{tocdepth}{1}
\tableofcontents

\section{Introduction}

\subsection{$\kappa$ classes}
Let $\overline{M}_{g,n}$ be the moduli space of
genus $g$, $n$-points stable curves.
The $\kappa$ classes in the Chow ring 
$A^*(\overline{M}_{g,n})$ with $\mathbb{Q}$-coefficients
are defined by the following construction.
Let
$$\epsilon: \overline{M}_{g,n+1} \rarr \overline{M}_{g,n}$$
be the universal curve viewed as the ($n+1$)-pointed space, let
$$\mathbb{L}_{n+1} \rarr \overline{M}_{g,n+1}$$
be the line bundle obtained from the cotangent space
of the last marking, 
and let
$$\psi_{n+1} = c_1(\mathbb{L}_{n+1})\ \in A^1(\overline{M}_{g,n+1})$$
be the Chern class.
The $\kappa$ classes
are
$$\kappa_i = \epsilon_*(\psi^{i+1}_{n+1})\  \in A^i(\overline{M}_{g,n}), 
\ \ \ i \geq 0\ .$$
The simplest is $\kappa_0$ which equals $2g-2+n$ times the unit
in $A^0(\overline{M}_{g,n})$.

The $\kappa$ classes on the moduli space of 
curves of compact type
$$M_{g,n}^c\subset \overline{M}_{g,n}$$
are defined by restriction.
The $\kappa$ ring 
$$
\kappa^*(M_{g,n}^c) \subset   A^*(M_{g,n}^c), 
$$
is the $\mathbb{Q}$-subalgebra
generated by the $\kappa$ classes.
The $\kappa$ rings are graded by degree.

By the results of \cite{kap1},
$\kappa^*(M_{g,n}^c)$ is generated as a $\mathbb{Q}$-algebra
by 
$$\kappa_1,\kappa_2,\ldots, \kappa_{g-1+\lfloor\frac{n}{2}
\rfloor}.$$
Moreover, there are no relation of degree less than or
equal to ${g-1+\lfloor\frac{n}{2}
\rfloor}$ if $n>0$.

\subsection{Universality}
Let $x_1,x_2,x_3, \ldots $ be variables with $x_i$ of degree $i$, and let 
$$f\in \mathbb{Q}[x_1,x_2,x_3,\ldots]$$
be {\em any} graded homogeneous polynomial.
The following universality property was stated in \cite{kap1}.

\begin{Theorem}\label{mmmj}
If
$f(\kappa_i) = 0 \in \kappa^*(M_{0,n}^c)$, then
$$f(\kappa_i) = 0 \in \kappa^*(M_{g,n-2g}^c)$$
for all genera $g$ for which $n-2g\geq 0$.
\end{Theorem}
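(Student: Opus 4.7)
The plan is to deduce the universality by constructing, via the virtual geometry of stable maps to $\proj^1$, a family of tautological relations on $\overline{M}_{G,N}$ whose restrictions to the compact-type locus $M_{G,N}^c$ give polynomial identities among the $\kappa_i$ depending on the pair $(G,N)$ only through the invariant $2G-2+N$. Since this invariant equals $n-2$ both for $(G,N)=(0,n)$ and for $(G,N)=(g,n-2g)$, matching data yield matching polynomial relations. Combined with the assertion that such relations generate the full ideal of $\kappa$-relations on $\overline{M}_{0,n}$ --- accessible via the explicit genus-zero intersection theory and the generation result of \cite{kap1} recalled in the excerpt --- the conclusion of Theorem~\ref{mmmj} follows.

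Concretely, I would work with the virtual fundamental class of $\overline{M}_{G,N}(\proj^1,d)$, possibly decorated with relative ramification conditions over a fixed point of $\proj^1$, and push it forward along the forgetful morphism to $\overline{M}_{G,N}$. When the degree of the tautological insertion being pushed exceeds the virtual dimension, the pushforward vanishes, yielding a tautological relation on $\overline{M}_{G,N}$. Branch divisor calculus, combined with Mumford's formula for the Chern character of the Hodge bundle, expresses each such relation as a polynomial identity among $\psi$, $\kappa$, boundary, and Hodge classes. Restricting to $M_{G,N}^c$ eliminates the contributions supported on non-separating boundary divisors, and the remaining $\psi$- and $\lambda$-contributions are reduced to the $\kappa$-generators using the generation statement from \cite{kap1}. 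What survives is a polynomial identity purely in the $\kappa_i$.

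The technical core --- and the main obstacle --- is the stability claim underlying this universality: although both the virtual class and the Hodge integrals entering the branch formula depend strongly on the source genus $G$, the projection of the resulting relation to the $\kappa$-subalgebra on $M_{G,N}^c$ must depend on $(G,N)$ only through $\kappa_0=2G-2+N$. This is an analog of the genus-stability of Hodge integrals familiar from Faber's conjectures, and its verification should proceed through virtual localization with respect to a $\mathbb{C}^{\times}$-action on $\proj^1$: on each connected component of the fixed locus the Hodge contributions must collapse --- after projection to the $\kappa$-subring --- in a manner sensitive only to the combination $2G-2+N$, so that the relations produced for $(0,n)$ and for $(g,n-2g)$ are literally the same polynomial in the $\kappa_i$.
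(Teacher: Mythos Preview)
Your broad outline --- produce relations via stable maps to $\proj^1$, restrict to compact type, and argue the result depends only on $2g-2+n$ --- matches the paper's strategy, but two essential ingredients are missing or misidentified.

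First, the mechanism for $(2g-2+n)$-dependence is not that ``Hodge contributions collapse.'' The paper chooses the specific equivariant class $c_{top}(\mathbb{R})$ with $\mathbb{R}=R^1\pi_*\mu^*\oh_{\proj^1}(-1)$ (not branch classes or Mumford's formula), together with carefully linearized evaluation classes $\rho_i,\widetilde\rho_j$, so that after virtual localization only comb graphs survive; the vanishing $\lambda_{g(v)}|_{M^c}=0$ then forces every vertex over $\mathsf{p}_1$ to have genus $0$, concentrating all of $g$ and the first $n$ markings on the single vertex over $\mathsf{p}_2$. The contribution of that vertex is a bracket class $\langle p_1,\ldots,p_\ell\rangle$, and the translation from bracket classes to $\kappa$-monomials (Lemmas~1 and~2) depends on $(g,n)$ only through $2g-2+n$ because only the string and dilaton equations enter. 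Your proposal to ``reduce $\psi$- and $\lambda$-contributions to $\kappa$-generators using \cite{kap1}'' would not give this: arbitrary tautological classes on $M_{g,n}^c$ do not project to $\kappa^*$ in a way controlled by $2g-2+n$ alone.

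Second, and more seriously, you assert that the relations so produced generate the full ideal of $\kappa$-relations in genus~$0$, citing \cite{kap1}. This is precisely what must be proven and is not supplied by \cite{kap1} (which uses stable \emph{quotients}, not stable maps, and only gives the dimension $|P(d,n-2-d)|$). The paper devotes all of Section~4 to this: the relations of Theorem~\ref{gvv} are assembled into a matrix $\mathsf{M}_\delta(d)$ indexed by partitions, and one proves it has rank at least $|P(d)|-|P(d,\zeta-d)|$ by exhibiting an explicit upper-triangular matrix $\mathsf{Y}_0(d)$ with $\mathsf{X}_0(d)\cdot\mathsf{Y}_0(d)$ lower-triangular with $\pm1$'s on the diagonal (Proposition~\ref{nlll}). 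The diagonal entries are computed from the leading coefficients of the generating functions $\mathsf{F}_{\alpha}$, evaluated via a genus-$0$ Gromov--Witten recursion (Lemma~\ref{yh3}). Without this rank computation the argument is incomplete: knowing that a family of universal relations exists does not establish that \emph{every} genus-$0$ relation lies in their span.
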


By Theorem 1, the higher genus $\kappa$ rings are
canonically quotients of the genus 0 rings,
$$\kappa^*(M_{0,2g+n}^c) \stackrel{\iota_{g,n}}{\rarr} 
\kappa^*(M^c_{g,n}) \rarr 0 .$$
Theorem 1 is our main result here.

\subsection{Bases}
Let $P(d)$ be the set of partitions of $d$, and let
$$P(d,k)\subset P(d)$$ be the set of partitions of $d$ into
at most $k$ parts. 
Let $|P(d,k)|$ be the
cardinality. To a partition{\footnote{The parts of
$\mathbf{p}$ are positive and satisfy $p_1\geq \ldots \geq p_\ell$.}}
$$\mathbf{p}= (p_1,\ldots,p_\ell) \in P(d,k),$$
we associate a $\kappa$ monomial by
$$\kappa_{\mathbf{p}} = \kappa_{p_1} \cdots \kappa_{p_\ell} \in 
\kappa^d(M_{g,n}^c) \ .$$

In \cite{kap1}, two basic facts about the $\kappa$ rings
of the moduli space of curves of compact type
are derived from Theorem \ref{mmmj}:
\begin{enumerate}

\item[$\bullet$]
the  canonical
 quotient,
$$\kappa^*(M_{0,2g+n}^c) \stackrel{\iota_{g,n}}{\rarr} 
\kappa^*(M^c_{g,n}) \rarr 0 \ $$
is an isomorphism for $n>0$,\vspace{5pt}
\item[$\bullet$]
a $\mathbb{Q}$-basis of $\kappa^d(M_{g,n}^c)$ is given by
$$\{ \kappa_{\mathbf{p}} \ | \ \mathbf{p} \in P(d,2g-2+n-d)\ \} \  $$
for $n>0$.
\end{enumerate}
The main tools used in \cite{kap1} are the virtual
geometry of the moduli space of stable quotients
\cite{MOP} and the intersection theory of strata
classes in the tautological ring $R^*(M_{g,n}^c)$.

By Theorem 
5 of \cite{kap1}, proven  unconditionally,
$$\text{dim}_{\mathbb{Q}} \ \kappa^d(M_{0,n}^c) = |P(d,n-2-d)|\ .$$
Hence, Theorem 1 is a consequence of the following
result.

\begin{Proposition} The space of 
relations among $\kappa$ monomials of degree $d$
valid in {\em all} the rings
$$\{ \kappa^*(M^c_{g,n}) \ |\  2g-2+n =\zeta \ \}$$ 
is of rank at least $|P(d)|-|P(d,\zeta-d)|$.
\end{Proposition}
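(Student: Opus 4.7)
\bigskip\noindent\emph{Proof plan.} My plan is to produce, for each partition $\mathbf{p}\in P(d)$ with $\ell(\mathbf{p})>\zeta-d$, an explicit relation among $\kappa$ monomials of degree $d$ that holds simultaneously in every $\kappa^*(M_{g,n}^c)$ with $2g-2+n=\zeta$, and to arrange these relations so that they form a triangular system with respect to a suitable ordering on $P(d)$ (for instance, refining partition length by reverse lexicographic order). Since the index set has cardinality $|P(d)|-|P(d,\zeta-d)|$, linear independence from triangularity would immediately give the asserted rank bound.

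The relations themselves should come from the virtual geometry of the moduli of stable quotients $\barr Q_{g,n}(\proj^m,e)$, exactly as in the genus~$0$ input of \cite{kap1,MOP}. Concretely, I would select products of evaluation and cotangent-line classes on $\barr Q_{g,n}(\proj^m,e)$ whose total codimension strictly exceeds the virtual dimension, so that their cap with the virtual class vanishes; pushing the vanishing forward to $\barr M_{g,n}^{\,c}$ via the stabilization morphism yields a vanishing polynomial in $\kappa$ classes on $M_{g,n}^c$. For each target partition $\mathbf{p}$ I would tune the construction parameters (the projective target $\proj^m$, the degree $e$, the number of auxiliary markings, and their $\psi$-exponents) so that the leading $\kappa$-monomial of the resulting relation is exactly $\kappa_{\mathbf{p}}$, with the subleading terms controlled by the same parameters.

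The crucial feature that would make this scheme work is $\zeta$-universality: the $\kappa$-polynomial relation produced by the construction should depend on $g$ and $n$ only through $\zeta=2g-2+n$. This is plausible because the only genus-sensitive datum in pushforwards of $\psi$-classes along the universal curve is the Euler-characteristic quantity $\kappa_0 = 2g-2+n$, and higher $\kappa$ classes transform predictably under forgetting markings. I would try to verify this by a direct analysis of the virtual pushforward, most likely through torus localization on $\barr Q_{g,n}(\proj^m,e)$ together with the string/dilaton-type equations that govern how descendent integrals on $\barr M_{g,n}^{\,c}$ change with $n$. The restriction to compact type is essential here: non-compact-type boundary strata would generically introduce $(g,n)$-specific corrections that would break the universality.

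The main obstacle, I expect, is simultaneously achieving triangularity and $\zeta$-universality. A generic virtual construction produces one vanishing relation mixing many $\kappa$ monomials at once, and isolating one relation per target $\mathbf{p}$ with the prescribed leading term will likely require a carefully parametrized family of integrands keyed to the parts of $\mathbf{p}$, together with an inductive argument over the chosen partition order. Verifying that each relation in this family is genuinely $\zeta$-universal — i.e., that the combined $(g,n)$-dependence collapses to a function of $\zeta$ alone — is the technical heart of the argument.
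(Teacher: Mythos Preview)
Your proposal is a strategy outline rather than a proof, and while the overall shape (construct virtual-geometry relations, verify they depend only on $\zeta$, count their rank) is correct, two of your concrete choices diverge from the paper in ways that matter, and the step you flag as ``the technical heart'' is exactly where the paper does something you have not anticipated.

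First, the paper does \emph{not} use stable quotients here. It uses stable maps to $\proj^1$ and torus localization (Theorem~\ref{gvv}); the author remarks explicitly that although the stable-quotient relations of \cite{kap1} are more elegant, ``the ranks of the relations via stable maps appear easier to compute.'' So your proposed source of relations is the one the paper deliberately avoids for this rank argument. Second, the $\zeta$-universality is not handled by a direct analysis of pushforwards as you sketch. Instead the paper introduces intermediate \emph{bracket classes} $\langle\mathbf{p}\rangle$ and shows (Lemmas~1 and~2) that the change of basis between $\kappa$ monomials, $\psi$-pushforwards, and bracket classes is invertible and depends only on $2g-2+n$. This reduces Proposition~1 to the equivalent Proposition~2 about bracket classes, and the localization output of Theorem~\ref{gvv} is already written in bracket classes with coefficients manifestly independent of $(g,n)$.

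The decisive gap is your triangularity plan. You hope to tune parameters so that each relation has leading term $\kappa_{\mathbf p}$; the paper does not achieve this. The coefficient matrix $\mathsf{X}_\delta(d)$ of the relations is dense and not visibly triangular. The nonsingularity is proved by exhibiting an explicit upper-triangular matrix $\mathsf{Y}_0(d)$ (built from multinomial and tree-count factors) such that $\mathsf{X}_0(d)\cdot\mathsf{Y}_0(d)$ is lower-triangular with $\pm1$ diagonal; the verification rests on generating-function identities (Lemmas~\ref{yh2}--\ref{yh4}) whose key coefficient is computed by an actual Gromov--Witten recursion on $\overline{M}_{0,1}(\proj^1,k)$. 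None of this machinery is present or hinted at in your plan, and without it the rank bound is unproven.
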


Proposition 1 is proven in Sections 2 - 4 by constructing
universal   
relations in $\kappa^*(M_{g,n}^c)$
via the virtual geometry of the moduli space
of stable maps.
The interplay between stable quotients and stable maps
is an interesting aspect of the study of $\kappa^*(M^c_{g,n})$.

\subsection{Gorenstein conjecture}
The rank $g$ Hodge bundle over the moduli space of curves
$$\mathbb{E} \rarr \overline{M}_{g,n}$$
has fiber $H^0(C,\omega_C)$ over $[C,p_1,\ldots,p_n]$.
Let $$\lambda_k = c_k(\mathbb{E})$$
be the Chern classes.
Since $\lambda_g$ vanishes when restricted to 
$$\delta_0 = \overline{M}_{g,n} \setminus M_{g,n}^c\ ,$$
we obtain a well-defined
evaluation
$$\phi: A^*(M_{g,n}^c) \rarr \mathbb{Q}$$
given by integration
$$\phi(\gamma) = \int_{\overline{M}_{g,n}} \overline{\gamma} 
\cdot \lambda_g\ ,$$
where $\overline{\gamma}$ is any lift of $\gamma\in A^*(M_{g,n}^c)$ to
$A^*(\overline{M}_{g,n})$.

The tautological rings $R^*(M^c_{g,n})\subset A^*(M_{g,n}^c)$ have been
conjectured in \cite{FP1,Pand} to be Gorenstein algebras with socle in degree
$2g-3+n$,
$$\phi: R^{2g-3+n}(M_{g,n}^c) \stackrel{\sim}{\rarr} \mathbb{Q}\ .$$
As a consequence of Theorem 1 and the intersection calculations of
\cite{kap1}, we obtain the following result.

\begin{Theorem} \label{nmmg}
If $n>0$ and $\xi \in \kappa^d(M_{g,n}^c) \neq 0$, the linear
function 
$$L_\xi: R^{2g-3+n-d}(M_{g,n}^c)\rarr \mathbb{Q}$$
defined by the socle evaluation
$$L_\xi(\gamma) =\phi(\gamma\cdot \xi)$$
is non-trivial.
\end{Theorem}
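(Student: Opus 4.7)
\medskip

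\noindent\textbf{Plan.}
The strategy is to reduce Theorem~2 to invertibility of an explicit Gram matrix and to evaluate that matrix using the intersection calculations of \cite{kap1}. By Theorem~\ref{mmmj} together with the second bullet recalled above (itself a consequence of Theorem~\ref{mmmj} and \cite{kap1}), the classes $\{\kappa_{\mathbf{p}}\,|\,\mathbf{p}\in P(d,2g-2+n-d)\}$ form a $\mathbb{Q}$-basis of $\kappa^d(M_{g,n}^c)$ whenever $n>0$. Writing a non-zero $\xi=\sum c_{\mathbf{p}}\kappa_{\mathbf{p}}$, non-triviality of $L_\xi$ reduces to producing a matching family of test classes $\gamma_{\mathbf{q}}\in R^{2g-3+n-d}(M_{g,n}^c)$ indexed by $\mathbf{q}\in P(d,2g-2+n-d)$ such that the Gram matrix
\[
G_{\mathbf{p},\mathbf{q}}\;=\;\phi(\kappa_{\mathbf{p}}\cdot\gamma_{\mathbf{q}})
\]
is invertible --- then for every non-zero coefficient vector some $L_\xi(\gamma_{\mathbf{q}})\neq 0$.

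Because $n>0$, $\psi$-classes at the markings are available as test ingredients, and I would take each $\gamma_{\mathbf{q}}$ to be a $\psi$-monomial $\psi_1^{a_1}\cdots\psi_n^{a_n}$ of total degree $2g-3+n-d$ with the exponent vector encoding $\mathbf{q}$ (for instance, the parts of $\mathbf{q}$ placed in distinct slots $a_i$ and the remaining degree distributed among the other slots). Applying $\kappa_k=\pi_*\psi_{n+1}^{k+1}$ on a forgetful tower converts each $\kappa$ factor into a $\psi$ class on a higher-dimensional moduli, after which Faber's $\lambda_g$ formula reduces
\[
\int_{\overline{M}_{g,n}}\kappa_{\mathbf{p}}\cdot\psi_1^{a_1}\cdots\psi_n^{a_n}\cdot\lambda_g
\]
to an explicit multinomial expression times the universal Hodge integral $\int_{\overline{M}_{g,1}}\psi_1^{2g-2}\lambda_g$. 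These are exactly the socle integrals handled in \cite{kap1}.

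The main obstacle is to order $P(d,2g-2+n-d)$ and to choose the encoding $\mathbf{q}\mapsto(a_i)$ so that $G$ is triangular with non-vanishing diagonal. This is a combinatorial separation statement among multinomial coefficients indexed by partitions, and it is where the intersection calculations of \cite{kap1} enter essentially. Theorem~\ref{mmmj} then plays a second role: any numerical identity that follows from the triangularity in genus $0$ lifts automatically to higher genus via the universality, so the combinatorial verification need only be carried out once.
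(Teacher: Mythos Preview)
Your high-level logic matches the paper exactly: for $n>0$ one knows
$\dim_{\mathbb{Q}}\kappa^d(M_{g,n}^c)=|P(d,2g-2+n-d)|$ (Theorem~\ref{mmmj} together with \cite{kap1}), and the socle pairing
\[
\kappa^d(M_{g,n}^c)\times R^{2g-3+n-d}(M_{g,n}^c)\longrightarrow\mathbb{Q}
\]
is shown elsewhere to have rank at least $|P(d,2g-2+n-d)|$; hence the pairing is nondegenerate on $\kappa^d$ and $L_\xi\neq 0$ for $\xi\neq 0$. The paper simply invokes \cite{MOP} for the rank bound and is done in two lines.

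Where your proposal goes beyond the paper is in sketching \emph{how} the rank bound would be established, and there your specific plan has a gap. You propose test classes $\gamma_{\mathbf q}=\psi_1^{a_1}\cdots\psi_n^{a_n}$ with the parts of $\mathbf q$ placed in distinct marking slots. But $\mathbf q\in P(d,2g-2+n-d)$ may have up to $2g-2+n-d$ parts, while only $n$ slots are available; already for $n=1$ there is a single $\psi$-monomial $\psi_1^{2g-2-d}$, whereas $|P(d,2g-1-d)|$ can be arbitrarily large. So $\psi$-monomials at the original markings do not furnish enough independent test classes, and no triangular ordering will save this. The actual rank computation in \cite{kap1}/\cite{MOP} uses \emph{strata classes} in $R^*(M_{g,n}^c)$ (as the paper notes in the discussion of \cite{kap1}), not just $\psi$-monomials. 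Your appeal to the $\lambda_g$ formula is the right evaluation tool once suitable test classes are chosen, but the choice itself must be revised. If you want to keep a self-contained argument, either replace the $\gamma_{\mathbf q}$ by pushforwards of $\psi$-monomials from $M^c_{g,n+\ell}$ (which land in $\kappa^*(M_{g,n}^c)$ and give enough room), or cite the strata pairing from \cite{kap1} directly, as the paper does.
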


Theorem \ref{nmmg}, 
discussed in Section \ref{lss},  may be viewed as significant 
evidence for the Gorenstein conjecture for
all $M_{g,n}^c$ with $n>0$.




\subsection{Acknowledgments}
The results here on the $\kappa$ rings 
were motivated by the study of stable quotients
 in
\cite{MOP}. Discussions with A. Marian and D. Oprea 
were very helpful.   
The methods developed  with C. Faber in \cite{FP}
played an important role.

The author was partially supported by NSF grant
DMS-0500187
 and the Clay institute.
 The research reported here was undertaken
while the author was visiting  MSRI in Berkeley 
and the 
Instituto
Superior T\'ecnico in Lisbon in the spring of
2009.

\section{$\kappa$ and $\psi$}
\subsection{$\psi$ classes}
Consider the cotangent line classes
$${\psi}_{n+1}, \ldots, {\psi}_{n+\ell} \in A^1(M_{g,n+\ell}^c)$$
at the last $\ell$ marked points.
Let
$$\epsilon^c: M_{g,n+\ell}^c \rarr M_{g,n}^c $$
be the proper forgetful map.
For each partition $\mathbf{p} \in P(d)$ of length $\ell$, we 
associate the class
\begin{equation*}
 \epsilon_*^c\left( \psi_{n+1}^{1+p_1} 
\cdots \psi_{n+\ell}^{1+p_\ell} \right)
\in A^d(M_{g,n}^c)\ .
\end{equation*}

The relation between the above  push-forwards of $\psi$ monomials and
the $\kappa$ classes is easily obtained.
For $\mathbf{p}=(d)$, we have
$$\epsilon_*^c(\psi_{n+1}^{1+d}) = \kappa_d$$
by definition.
The standard cotangent line comparison formulas
yield the length 2 case,
$$\epsilon_*^c(\psi_{n+1}^{1+p_1} \psi_{n+2}^{1+p_2}
) = \kappa_{p_1}\kappa_{p_2} + \kappa_{p_1+p_2}\ .$$
The full formula, due to Faber, is 
\begin{equation}
\label{vvrr}
 \epsilon_*^c\left( \psi_{n+1}^{1+p_1} 
\cdots \psi_{n+\ell}^{1+p_\ell} \right)
=
\sum_{\sigma\in S_\ell} \kappa_{\sigma(\mathbf{p})}\ ,
\end{equation}
where the sum is over the symmetric group $S_\ell$.
For $\sigma\in S_\ell$, let
$$\sigma = \gamma_1 \ldots \gamma_r$$
be the canonical cycle decomposition (including the 1-cycles), and
let $\sigma(\mathbf{p})_i$ be the sum of the parts of $\mathbf{p}$
with indices in the cycle $\gamma_i$.
Then,
$$\kappa_{\sigma(\mathbf{p})} = \kappa_{\sigma(\mathbf{p})_1}
\cdots \kappa_{\sigma(\mathbf{p})_r} \ .$$
A discussion of \eqref{vvrr} can be found in \cite{AC}.

\begin{Lemma} The sets of classes in $A^d(M_{g,n}^c)$ defined
by
$$ 
\{ \ \epsilon_*^c\left( \psi_{n+1}^{1+p_1} 
\cdots \psi_{n+\ell}^{1+p_\ell} \right) \ | \ \mathbf{p} \in P(d) \ \}
\ \ \ \text{and} \ \ 
\
\{ \ \kappa_{\mathbf{p}} \ |\ \mathbf{p} \in P(d) \ \}
$$ 
are related by an  invertible linear transformation independent
of $g$ and $n$.
\end{Lemma}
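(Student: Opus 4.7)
The plan is to read off the invertibility directly from the explicit formula \eqref{vvrr}. Index both sets by $P(d)$. Let $T$ denote the $|P(d)|\times|P(d)|$ matrix whose $(\mathbf{p},\mathbf{q})$-entry $T_{\mathbf{p},\mathbf{q}}$ is the coefficient of $\kappa_{\mathbf{q}}$ in the expansion of $\epsilon_*^c(\psi_{n+1}^{1+p_1}\cdots\psi_{n+\ell}^{1+p_\ell})$ given by \eqref{vvrr}. Since each summand on the right of \eqref{vvrr} is indexed by a permutation $\sigma\in S_\ell$ and the resulting partition $\sigma(\mathbf{p})$ is determined by the cycle type of $\sigma$ together with the parts of $\mathbf{p}$, the entry $T_{\mathbf{p},\mathbf{q}}$ is a non-negative integer determined purely by the combinatorics of $\mathbf{p}$ and $\mathbf{q}$. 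In particular $T$ is independent of $g$ and $n$.

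Next I would order $P(d)$ so that partitions with more parts come first (breaking ties arbitrarily) and verify that, in this ordering, $T$ is upper triangular with $1$'s on the diagonal. The key observation is: for $\sigma\in S_\ell$, the partition $\sigma(\mathbf{p})$ has exactly $r$ parts, where $r$ is the number of cycles of $\sigma$, and each part of $\sigma(\mathbf{p})$ is the sum of the parts of $\mathbf{p}$ indexed by the corresponding cycle. Hence $\sigma(\mathbf{p})$ is always a coarsening of $\mathbf{p}$, and in particular has at most $\ell=\ell(\mathbf{p})$ parts, with equality exactly when $\sigma$ is the identity. Thus $T_{\mathbf{p},\mathbf{q}}=0$ whenever $\ell(\mathbf{q})>\ell(\mathbf{p})$, and the only contribution to $T_{\mathbf{p},\mathbf{p}}$ comes from $\sigma=e$, yielding $T_{\mathbf{p},\mathbf{p}}=1$.

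Since $T$ is upper triangular with unit diagonal, it is invertible over $\mathbb{Z}$ (hence over $\mathbb{Q}$), and the inverse is again independent of $g$ and $n$. This expresses each $\kappa_{\mathbf{p}}$ as a universal integer linear combination of the push-forward classes $\epsilon_*^c(\psi^{1+p_1}_{n+1}\cdots\psi^{1+p_\ell}_{n+\ell})$, completing the proof of the lemma.

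There is essentially no serious obstacle here: the lemma is a formal consequence of the explicit sum \eqref{vvrr}. The only point requiring attention is the verification of the triangular structure, which reduces to the elementary statement that a permutation in $S_\ell$ has $\ell$ cycles if and only if it is the identity.
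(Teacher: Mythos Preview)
Your proposal is correct and follows essentially the same argument as the paper: both use formula \eqref{vvrr} to observe that the transformation is triangular with respect to the partial order by length on $P(d)$, with $1$'s on the diagonal, and hence invertible and independent of $g$ and $n$. You have simply spelled out in more detail why the only $\sigma\in S_\ell$ producing a partition of length $\ell$ is the identity.
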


\begin{proof}
Formula \eqref{vvrr} defines a universal transformation
independent of $g$ and $n$. Since the transformation is
triangular in the partial ordering of $P(d)$
by length (with 1's on the diagonal), the invertibility
is clear.
\end{proof}

\subsection{Bracket classes}
Let $\mathbf{p} \in P(d)$ be a partition of length $\ell$. Let
\begin{equation}\label{brrg}
\langle \mathbf{p} \rangle = \epsilon_*^c \ \Big[
\prod_{i=1}^\ell \frac{1}{1-p_i \psi_{n+i}} \Big]^{\ell+d} \ \in
A^d(M_{g,n}^c)\ . 
\end{equation}
The superscript in the inhomogeneous 
expression
$\Big[
\prod_{i=1}^\ell \frac{1}{1-p_i \psi_{n+i}} \Big]^{\ell+d}$
indicates the summand in
$A^{\ell+d}(M_{g,n+\ell}^c)$.

We can easily expand definition \eqref{brrg}
to express the class $\langle \mathbf{p} \rangle$
linearly in terms of the classes 
$$\{ \ \epsilon_*^c\left( \psi_{n+1}^{1+p_1} 
\cdots \psi_{n+\ell}^{1+p_\ell} \right) \ | \ \mathbf{p} \in P(d) \ \} \ .$$
Since the string and dilation equation must be used
to remove the $\psi_{n+i}^0$ and $\psi_{n+i}^1$
factors, the transformation depends upon $g$ and $n$
only through $2g-2+n$.

\begin{Lemma} The sets of classes in $A^d(M_{g,n}^c)$ defined
by
$$\{ \  \langle \mathbf{p}\rangle \ |\ \mathbf{p} \in P(d) \ \} \ \ \ \text{and} \ \ 
\
\{ \ \epsilon_*^c\left( \psi_{n+1}^{1+p_1} 
\cdots \psi_{n+\ell}^{1+p_\ell} \right) \ | \ \mathbf{p} \in P(d) \ \}$$ 
are related by an  invertible linear transformation depending
only upon $2g-2+n$.
\end{Lemma}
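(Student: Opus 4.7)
The plan is to prove Lemma 2 by expanding the bracket class directly, reducing the resulting terms via the string and dilaton equations, and then reading off both the universality and the invertibility from the triangular structure of the transformation that emerges.

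First I would expand
\begin{equation*}
\Big[\prod_{i=1}^\ell \frac{1}{1-p_i\psi_{n+i}}\Big]^{\ell+d} \ = \ \sum_{\substack{a_1,\ldots,a_\ell \geq 0 \\ a_1+\cdots+a_\ell=\ell+d}} \prod_{i=1}^\ell p_i^{a_i}\,\psi_{n+i}^{a_i},
\end{equation*}
so that $\langle\mathbf{p}\rangle$ becomes a sum over tuples $(a_1,\ldots,a_\ell)$. For any tuple with every $a_i\geq 2$ the pushforward under $\epsilon^c$ is directly one of the target classes $\epsilon^c_*(\psi_{n+1}^{1+q_1}\cdots\psi_{n+\ell}^{1+q_\ell})$ with $q_i = a_i-1 \geq 1$, giving a partition $\mathbf{q}\in P(d)$ whose length equals $\ell = \ell(\mathbf{p})$. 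These are the ``leading'' terms.

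For tuples in which some $a_i \in \{0,1\}$ I would factor $\epsilon^c$ through the forgetful map dropping the $(n+i)$-th marking and apply either the string equation (when $a_i=0$) or the dilaton equation (when $a_i=1$) to eliminate the offending $\psi$-factor. These equations remain valid on the compact type locus because the cotangent line comparison $\psi_{n+j}-\pi^*\psi_{n+j}$ is supported on a boundary divisor whose generic point is itself of compact type. Each string step replaces the monomial by a combinatorial sum of strictly shorter monomials, and each dilaton step contributes exactly the scalar $2g-2+n+\ell-1$. Iterating until every surviving $a_i\geq 2$ expresses $\langle\mathbf{p}\rangle$ as a linear combination of the target classes, with coefficients polynomial in the $p_i$ and in $2g-2+n$ and involving no other dependence on $g$ or $n$ individually.

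Finally, invertibility would follow from the triangular structure of this transformation. Since both string and dilaton reductions strictly decrease the number of surviving $\psi$-factors, the coefficient of $\epsilon^c_*(\psi_{n+1}^{1+q_1}\cdots\psi_{n+k}^{1+q_k})$ in $\langle\mathbf{p}\rangle$ vanishes whenever $k=\ell(\mathbf{q})>\ell(\mathbf{p})$; and when $\mathbf{q}=\mathbf{p}$ the diagonal coefficient receives contributions only from the tuples $a_i = 1 + p_{\sigma(i)}$ and equals a positive integer multiple of $\prod_i p_i$, hence is nonzero. Thus the transformation is triangular with nonzero diagonal entries in the partial order on $P(d)$ by length and is invertible. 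I expect the main obstacle to be the careful bookkeeping of the string and dilaton iterations when several indices $a_i$ are simultaneously small; the essential check is that the only scalar from that process carrying $g$ or $n$ is the dilaton factor, which depends exclusively on $2g-2+n$.
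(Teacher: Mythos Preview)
Your treatment of the dependence on $2g-2+n$ via the string and dilaton equations is correct and matches exactly what the paper says in the paragraph preceding the Lemma. The problem lies in your invertibility argument.

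You claim the transformation is ``triangular with nonzero diagonal entries in the partial order on $P(d)$ by length.'' But length is only a \emph{partial} order on $P(d)$: there are in general many partitions of $d$ with the same length $\ell$, and your expansion shows that $\langle \mathbf{p}\rangle$ contributes, at top length $\ell=\ell(\mathbf{p})$, to \emph{every} partition $\mathbf{q}$ of length $\ell$, not just to $\mathbf{q}=\mathbf{p}$. Concretely, the leading block is the matrix
\[
M[\mathbf{p},\mathbf{q}]=\sum_{\substack{(a_1,\ldots,a_\ell)\\ a_i\ge 2,\ \{a_i-1\}=\mathbf{q}}} \prod_{i=1}^{\ell} p_i^{a_i},
\]
which is a full, non-triangular matrix on the set of length-$\ell$ partitions. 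For instance, with $d=4$ and $\ell=2$ one finds that $\langle (2,2)\rangle$ has a nonzero coefficient on the class indexed by $(3,1)$ and vice versa. Knowing that the diagonal entries $M[\mathbf{p},\mathbf{p}]$ are nonzero is therefore not enough; you must prove that each of these length-$\ell$ blocks is nonsingular, and no choice of total refinement of the length order will make the matrix honestly triangular.

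This is precisely the nontrivial step the paper does not attempt to redo: it cites the proof of Proposition~3 in \cite{FP}, where the nonsingularity of exactly these partition matrices is established by a separate combinatorial argument. Your proposal would need to supply such an argument (or invoke that reference) to close the gap.
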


\begin{proof}
Only the invertibility remains to be established.
The result exactly follows from the proof of Proposition 3 in 
\cite{FP}.
\end{proof}

By Lemma 1 and 2, the bracket classes lie in the
$\kappa$ ring,
$$\langle \mathbf{p} \rangle \in \kappa^d(M_{g,n}^c)\ .$$
We will prove Proposition 1 in the following 
equivalent form.

\begin{Proposition} The space of 
relations among the classes
$$\{ \  \langle \mathbf{p}\rangle \ |\ \mathbf{p} \in P(d) \ \}$$
valid in {\em all} the rings
$$\{ \kappa^*(M^c_{g,n}) \ |\  2g-2+n =\zeta \ \}$$ 
is of rank at least $|P(d)|-|P(d,\zeta-d)|$.
\end{Proposition}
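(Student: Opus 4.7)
The plan is to construct enough universal relations via virtual localization on moduli spaces of stable maps, adapting the strategy of \cite{FP} from the closed moduli setting $\overline{M}_g$ to the compact-type locus with marked points. For parameters $(r,k,d)$, the starting point is the moduli space $\overline{M}_{g,n}(\mathbb{P}^r,d)$ with its natural $(\com^*)^{r+1}$-action and an equivariant integrand built from the obstruction sheaf $R^1\pi_*f^*\mathcal{O}(-k)$, possibly combined with a factor of $\lambda_g$ (which vanishes on the nonseparating boundary, so that only compact-type contributions survive under pushforward to $M^c_{g,n}$). By choosing $(r,k,d)$ so that the virtual dimension exceeds the degree of the integrand, the push-forward to a point vanishes; the push-forward to $M^c_{g,n}$ then gives a nontrivial class equal to $0$.

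Applying the Graber--Pandharipande virtual localization formula expands this vanishing into a sum over decorated bipartite graphs indexing the $(\com^*)^{r+1}$-fixed loci. Each graph has vertices corresponding to contracted subcurves lying over torus-fixed points of $\mathbb{P}^r$, and edges corresponding to multiple covers of invariant lines. The key observation is that the contribution of a contracted vertex with attached edges of degrees $p_1,\ldots,p_\ell$, after extracting the universal equivariant edge factors and Hurwitz-type combinatorics, is precisely the bracket class $\langle \mathbf{p}\rangle$ on $M^c_{g',n'}$ in the sense of \eqref{brrg}. Collecting contributions yields a universal identity
\begin{equation*}
\sum_{\mathbf{p}\in P(d)} c_{\mathbf{p}}(r,k)\,\langle \mathbf{p}\rangle \ =\ 0 \quad \text{in } \kappa^d(M^c_{g,n}),
\end{equation*}
whose coefficients depend on $(g,n)$ only through $\zeta=2g-2+n$, exactly the universality required by Proposition 2.

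The main obstacle is the rank estimate: one must show that, as the parameters $(r,k)$ and the discrete localization data vary, the resulting coefficient vectors $(c_{\mathbf{p}})$ span a subspace of $\Q^{P(d)}$ of dimension at least $|P(d)| - |P(d,\zeta-d)|$. The plan is to order $P(d)$ by decreasing length and, for every $\mathbf{p}$ with $\ell(\mathbf{p}) > \zeta-d$, to exhibit a choice of parameters producing a relation whose coefficient at $\mathbf{p}$ is nonzero while all other nonzero coefficients are supported on partitions of length $\geq \ell(\mathbf{p})$ (hence also outside $P(d,\zeta-d)$). Such a triangular structure immediately yields the required lower bound on the rank. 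Verifying the nonvanishing of the leading coefficient for each such $\mathbf{p}$ reduces to an explicit localization computation: one isolates a single graph whose unique contracted vertex carries $\ell(\mathbf{p})$ edges with degrees matching the parts of $\mathbf{p}$, and checks that the associated one-vertex Hurwitz contribution does not vanish. This combinatorial nonvanishing — parallel to the parameter matching carried out in \cite{FP} — is the technical core of the argument.
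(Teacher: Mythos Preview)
Your overall strategy — build universal relations via virtual localization on stable maps and then argue a rank bound — matches the paper's. But two points of your plan do not match and, more importantly, the rank argument you sketch is not sufficient.

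\textbf{Setup.} The paper does not vary $(\mathbb{P}^r,\mathcal{O}(-k))$. It works entirely on $\overline{M}_{g,n+m}(\mathbb{P}^1,d)$ with a single $\mathbb{C}^*$, and the free parameters come from \emph{extra marked points with $\psi$-powers}: the integrand is $\rho_{n+1}^{d-1-\delta-|\alpha|}\prod_{i=1}^m\rho_{n+i}\psi_{n+i}^{\alpha_i}\prod_{j=1}^n\widetilde{\rho}_j\,c_{top}(\mathbb{R})$, indexed by a vector $\alpha$ satisfying $|\alpha|\le d-2-\delta$ and $\alpha_i>0$ for $i>1$. No $\lambda_g$ is inserted; the restriction to compact type is handled by pushing forward along $\nu^c$ and observing that graphs with positive-genus vertices over $\mathsf{p}_1$ die because $\lambda_{g(v)}$ vanishes on $M^c_{g(v),\mathrm{val}(v)}$. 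The resulting relations have explicit coefficients $\mathsf{C}_\alpha^{\mathbf{p}}$ depending on $(g,n)$ only through $\zeta$.

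\textbf{The rank gap.} Your proposed triangularity — for each $\mathbf{p}$ with $\ell(\mathbf{p})>\zeta-d$, a relation supported on $\{\mathbf{q}:\ell(\mathbf{q})\ge\ell(\mathbf{p})\}$ with nonzero coefficient at $\mathbf{p}$ — is only \emph{block}-triangular by length, not triangular. Within a fixed length there are many partitions, and your condition does not separate them; the relations you describe could all coincide within a length block, so no rank bound follows. The paper's relations are in fact \emph{not} length-triangular: the displayed matrix $\mathsf{X}_0(6)$ is fully dense. The actual rank proof is a nontrivial matrix factorization. One assigns to each $\mathbf{p}\in P_\delta(d)$ a specific $\alpha[\mathbf{p}]$ (essentially $\mathbf{p}^-$), forms the square coefficient matrix $\mathsf{X}_\delta(d)$, and exhibits an explicit upper-triangular matrix $\mathsf{Y}_0(d)$ (built from multinomial/tree-counting numbers) such that $\mathsf{X}_0(d)\cdot\mathsf{Y}_0(d)$ is lower-triangular with $\pm1$ diagonal. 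The lower-triangularity is proved via generating functions $\mathsf{F}_{\alpha_1,\ldots,\alpha_m}=\langle e^{-Z_1}Z_{\alpha_1}\cdots Z_{\alpha_m}\rangle$, whose polynomiality in $x$ of degree $\le 1+\sum\alpha_i$ (Lemma~\ref{yh2}) and leading coefficient (Lemma~\ref{yh3}, computed by a Gromov--Witten recursion) supply the needed vanishing and nonvanishing. This is the technical core, and nothing like ``isolate a single graph'' suffices.
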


\section{Relations via stable maps}
\subsection{Moduli of stable maps}
Let $\overline{M}_{g,n+\nn}(\proj^1,d)$ denote the moduli of stable
maps{\footnote{Stable maps were defined in \cite{Kont}, see \cite{FulP}
for an introduction.}} to $\proj^1$ of degree $d$, and let
$$\nu: \overline{M}_{g,n+\nn}(\proj^1,d)\rarr \overline{M}_{g,n}$$
be the morphism forgetting the map and the last $\nn$ markings.
The moduli space  
$$M_{g,n+\nn}^c(\proj^1,d) \subset \overline{M}_{g,n+\nn}(\proj^1,d)$$
is defined by requiring the domain curve to be of compact type.
The restriction
$$\nu^c: M_{g,n+\nn}^c(\proj^1,d) \rarr M_{g,n}^c$$
is proper and equivariant with respect to the symmetries
of $\proj^1$.

We will find relations in $A^*(M_{g,n}^c)$ by localizing 
$\nu^c$ push-forwards which vanish geometrically.
A complete analysis in the socle  $A^{2g-3}(M_g^c)$ was carried out
in \cite{FP}, but much more will be required for Theorem 1.
While the relations  in $A^*(M_{g,n}^c)$ via stable quotients \cite{kap1}
are more elegantly expressed, the ranks of the relations via stable
maps appear easier to compute.

\subsection{Relations}
\subsubsection{Indexing} \label{indd}
Let $d\leq 2g-3+n$, and let
$$\delta = 2g-3+n -d\ .$$
We will construct
a series of relations $I(g,d,\alpha)$  in $A^d(M_{g,n}^c)$
where
 $$\alpha=(\alpha_1, \ldots, \alpha_\nn)$$
is a (non-empty)
vector of non-negative integers satisfying two conditions:
\begin{enumerate}
\item[(i)] $|\alpha|= \sum_{i=1}^\nn \alpha_i \leq d-2-\delta$,
\item[(ii)] $\alpha_i>0$ for $i>1$.
\end{enumerate}
By condition (i), 
$d-2-\delta \geq 0$ so
$$d > g-1 +\lfloor \frac{n}{2} \rfloor\ .$$
Condition (ii) implies $\alpha_1$ is the only
integer permitted to vanish. 
The relation $I(g,d,\alpha)$  will be a variant
of the equations considered in \cite{FP}.

\subsubsection{Formulas}
Let
$\Gamma$ denote
the data type
\begin{equation}
\label{alll2}
(p_1,\ldots, p_\nn) \scup \{ p_{\nn+1}, \ldots, p_\ell\},
\end{equation}
satisfying
$$p_i >0,  \ \ \ \sum_{i=1}^\ell
p_i =d.$$
The first part of $\Gamma$ is an ordered $\nn$-tuple $(p_1,\ldots, p_\nn)$.
The second part
$\{ p_{\nn+1}, \ldots, p_\ell\}$
is an unordered set.
Let $\text{Aut}( 
\{ p_{\nn+1}, \ldots, p_\ell\})$ be the
group which permutes equal parts.
The group of 
automorphisms $\text{Aut}(\Gamma)$  
equals $\text{Aut}(\{ p_{\nn+1}, \ldots, p_\ell\})$.

\begin{Theorem} \label{gvv}
For all $\alpha$ satisfying (i-ii),
\begin{multline*}
\sum_{\Gamma} \frac{1}{|\text{\em Aut}(\Gamma)|} \ 
\prod_{i=1}^\nn p_i^{-\alpha_i} \prod_{i=\nn+1}^{\ell} (-p_i)^{-1}
\prod_{j=1}^\ell \frac{p_i^{p_i}}{p_i!} \ \ \lan p_1,\ldots,p_\ell \ran
\\ 
= 0\ 
\in A^{d}(M_{g,n}^c)
,
\end{multline*}
where the sum is over all  $\Gamma$ of type (\ref{alll2}).
\end{Theorem}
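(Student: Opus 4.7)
The approach closely follows the socle-case strategy of \cite{FP}: construct an equivariant class on the moduli of stable maps $\overline{M}_{g,n+\nn}(\proj^1,d)$ whose $\nu^c$-pushforward vanishes in $A^d(M_{g,n}^c)$, and then evaluate the vanishing pushforward via virtual $\com^*$-equivariant localization. Equip $\proj^1$ with the standard $\com^*$-action fixing $0$ and $\infty$, and consider an equivariant integrand of the form
\begin{equation*}
\prod_{i=1}^{\nn} \ev_{n+i}^*([0])\,\psi_{n+i}^{\alpha_i}\;\cdot\;\Theta \;\cap\;[\overline{M}_{g,n+\nn}(\proj^1,d)]^{\mathrm{vir}},
\end{equation*}
where $\Theta$ is a Hodge- or obstruction-bundle factor (modeled on the class used in \cite{FP}) chosen so that the $\nu$-pushforward is supported in $\overline{M}_{g,n}\setminus M_{g,n}^c$. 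Conditions (i)-(ii) on $\alpha$ are precisely the numerical constraints that place the output in codimension $d$ on $M_{g,n}^c$ and make the desired vanishing dimensionally viable.

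The core of the argument is virtual localization \`a la Graber--Pandharipande. Fixed loci of the $\com^*$-action on $\overline{M}_{g,n+\nn}(\proj^1,d)$ are indexed by decorated bipartite graphs whose vertices represent components contracting to $0$ or $\infty$ and whose edges are totally-ramified Galois covers of $\proj^1$. Restriction to the compact-type locus forces the dual graph of the source to be a tree, and the only surviving configurations are \emph{star} graphs: a single genus-$g$ vertex carrying the original $n$ markings, with $\ell$ rational tails attached. The first $\nn$ tails are degree-$p_i$ covers carrying the marked points $n+i$ (the ordered part of \eqref{alll2}); the remaining $\ell-\nn$ tails are unmarked (the unordered set), and the group permuting equal-degree unmarked tails produces the factor $1/|\Aut(\Gamma)|$. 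Each star contribution factorizes: every tail contributes the equivariant Hurwitz factor $p_j^{p_j}/p_j!$; every marked tail contributes the additional $p_i^{-\alpha_i}$ from the standard $\psi$-class comparison on a degree-$p_i$ cover; and every unmarked tail contributes $-1/p_i$ from the equivariant Euler class in its normal direction. The central-vertex contribution, after assembling the tail insertions into $\prod_j 1/(1-p_j\psi_{n+j})$ and pushing forward along $\epsilon^c\colon M_{g,n+\ell}^c\to M_{g,n}^c$, is exactly the bracket class $\lan p_1,\ldots,p_\ell\ran$. Summing over $\Gamma$ and equating the total with zero yields Theorem \ref{gvv}.

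The principal obstacle is two-fold: first, exhibiting the correct class $\Theta$ so that the $\nu^c$-pushforward actually vanishes on $M_{g,n}^c$, which demands a delicate analysis of which graph contributions are supported in $\overline{M}_{g,n}\setminus M_{g,n}^c$ and hence killed upon restriction; and second, verifying that the non-equivariant limit of the localization identity is well-defined, i.e.\ that higher-order poles in the $\com^*$ parameter cancel. Both are handled by a direct extension of the arguments for the socle case in \cite{FP}, with extra bookkeeping to track the $\alpha$-decorations and the ordered versus unordered tail structure.
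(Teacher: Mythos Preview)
Your high-level plan---push forward an equivariant class from $\overline{M}_{g,n+\nn}(\proj^1,d)$ and compute it by virtual localization---matches the paper's. But two key mechanisms are misidentified, and without them the argument does not go through.

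First, the vanishing. You propose to choose $\Theta$ so that the push-forward is supported on $\overline{M}_{g,n}\setminus M_{g,n}^c$. That is not how the paper obtains zero. The actual integrand contains, in addition to $c_{top}(\mathbb{R})$ with $\mathbb{R}=R^1\pi_*\mu^*\oh_{\proj^1}(-1)$, the factors $\prod_{j=1}^n \widetilde{\rho}_j$ (forcing the first $n$ markings to the vertex over $\infty$) and, crucially, an \emph{extra} power $\rho_{n+1}^{\,d-1-\delta-|\alpha|}$ multiplying the obvious $\prod_{i=1}^\nn \rho_{n+i}\psi_{n+i}^{\alpha_i}$. Condition (i), $|\alpha|\le d-2-\delta$, is exactly what makes the total exponent of $\rho_{n+1}$ at least $2$; since $\rho_i^2=0$ in the non-equivariant limit, the push-forward $I(g,d,\alpha)$ vanishes on all of $\overline{M}_{g,n}$, not merely on the compact-type locus. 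Your formulation leaves $\Theta$ unspecified and assigns it a role it cannot play.

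Second, the graph reduction. The comb/star shape is \emph{not} forced by compact type. It comes from the chosen linearization $[0,1]$ on $\oh_{\proj^1}(-1)$: a vertex over $0$ with edge valence $\ge 2$ produces a trivial Chern root of $\mathbb{R}$ with weight $0$, killing the term. Compact type enters only afterwards, to kill positive genus at the vertices over $0$ via $\lambda_{g(v)}|_{M_{g(v),\text{val}(v)}^c}=0$. Condition (ii), $\alpha_i>0$ for $i>1$, is then used in a separate dimension count on $\overline{M}_{0,r+1}$ to force the last $\nn$ markings onto distinct vertices. You attribute all of this to the tree condition on the domain, which conflates the dual graph of the curve with the localization graph and misses the role of the linearizations. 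Once the correct integrand and the three separate pruning steps are in place, the residue calculation you describe (yielding $p_i^{p_i}/p_i!$, $p_i^{-\alpha_i}$, $(-p_i)^{-1}$, and the bracket class) is indeed routine.
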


The bracket $\lan p_1,\ldots,p_\ell \ran \in A^d(M_{g,n}^c)$ denotes the
class associated to the partition defined by the union of
all the parts
$p_i$ of $\Gamma$.

\subsection{Proof of Theorem \ref{gvv}}
\label{pof3}

\subsubsection{Torus actions} 
\label{tactt}
The first step is to define the appropriate
torus actions.
Let $$\proj^1=\proj(V)$$ 
where $V=\com \oplus \com$.
Let $\com^*$ act diagonally on $V$:
\begin{equation}
\label{repp}
\xi\cdot (v_1,v_2) = ( v_1, 
\xi \cdot v_2).
\end{equation}
Let $\mathsf{p}_1, \mathsf{p}_2$ be the 
fixed points $[1,0], [0,1]$ of the corresponding
action on $\proj(V)$.
An equivariant lifting  of $\com^*$ to a line bundle $L$
over 
$\proj(V)$ is uniquely determined by the weights $[l_1,l_2]$
of the fiber
representations at the fixed points 
$$L_1= L|_{\mathsf{p}_1}, \ \ \ L_2= L|_{\mathsf{p}_2}.$$
The canonical lifting of $\com^*$ to the
tangent bundle $T_\proj$ has weights $[1,-1]$.
We will utilize the equivariant liftings of
$\com^*$ to $\oh_{\proj(V)}(1)$ and $\oh_{\proj(V)}(-1)$ with weights
$[1,0]$, $[0,1]$ respectively.

Over the moduli space of stable maps
 $\overline{M}_{g,n+\nn}(\proj(V), d)$, we have
$$\pi: U \rarr \overline{M}_{g,n+\nn}(\proj(V),d), 
\ \ \ \mu: U \rarr \proj(V)$$
where $U$ is the universal curve and $\mu$ is
the universal map.
The representation (\ref{repp}) canonically
induces $\com^*$-actions on $U$ and 
$\overline{M}_{g,n+\nn}(\proj(V),d)$ compatible
with the maps $\pi$ and $\mu$.
The $\com^*$-equivariant virtual class
$$[\overline{M}_{g,n+\nn}(\proj(V),d)]^{vir} \in A_{2g+2d-2+n+\nn}^{\com^*}
(\overline{M}_{g,n+\nn}(\proj(V),d))$$
will play an important role.

\subsubsection{Equivariant classes}
\label{ecc}
Three types of equivariant Chow classes on  
$\overline{M}_{g,n+\nn}(\proj(V),d)$ 
will be considered here:
\begin{enumerate}
\item[$\bullet$]
The linearization $[0,1]$ on $\oh_{\proj(V)}(-1)$
defines an $\com^*$-action on  
the rank $d+g-1$ bundle 
\begin{equation*}
\label{wqwq}
\mathbb{R}=R^1\pi_* (\mu^* \oh_{\proj(V)}(-1))
\end{equation*}
 on $\overline{M}_{g,n+\nn}(\proj(V),d)$.
Let $$c_{top}(\mathbb{R}) \in
A^{g+d-1}_{\com^*}(\overline{M}_{g,n+\nn}(\proj(V),d))$$
be the top Chern class. 


\item[$\bullet$]
For each marking $i$, let $\psi_i \in A^1_{\com^*}(\overline{M}_{g,n+\nn}
(\proj(V),d)$
denote the first Chern class of 
the canonically linearized cotangent line corresponding to
$i$.

\item[$\bullet$] Denote the   $i^{th}$ evaluation morphism
by
$$\text{ev}_i: \overline{M}_{g,n+\nn}(\proj(V),d) \rarr \proj(V).$$
With $\com^*$-linearization $[1,0]$ on
$\oh_{\proj(V)}(1)$,
let
$$\rho_i =c_1( \text{ev}_i^* \oh_{\proj(V)}(1)) \in A^1_{\com^*}
(\overline{M}_{g,n+\nn}(\proj(V),d)\ .$$
With $\com^*$-linearization $[0,-1]$ on
$\oh_{\proj(V)}(1)$,
let
$$\widetilde{\rho}_i =c_1( \text{ev}_i^* \oh_{\proj(V)}(1)) \in A^1_{\com^*}
(\overline{M}_{g,n+\nn}(\proj(V),d)\ .$$
\end{enumerate}
In the non-equivariant limit, $\rho_i^2=0$.
Our notation here closely follows \cite{FP}.

\subsubsection{Vanishing integrals}
\label{vanny}
The forgetful morphism
$$\nu: \overline{M}_{g,n+\nn}(\proj(V),d) \rarr \overline{M}_{g,n}$$
is $\com^*$-equivariant with respect to the trivial action
on $\overline{M}_{g,n}$.
As in Section \ref{indd},
let $$d\leq 2g-3+n, \ \ \ 
\delta = 2g-3+n -d,$$
and let $\alpha=(\alpha_1, \ldots, \alpha_\nn)$
satisfy
\begin{enumerate}
\item[(i)] $|\alpha|= \sum_{i=1}^\nn \alpha_i \leq d-2-\delta$,
\item[(ii)] $\alpha_i>0$ for $i>1$.
\end{enumerate}
Let $I(g,d,\alpha)$ be the $\com^*$-equivariant push-forward
\begin{multline*}
 \nu_*\left(
\rho_{n+1}^{d-1-\delta-|\alpha|} \ 
\prod_{i=1}^\nn \rho_{n+i} \psi_{n+i}^{\alpha_i} \
\prod_{{j}=1}^n \widetilde{\rho}_{j} 
\ c_{top}(\mathbb{R})  
\ \cap \
[\overline{M}_{g,n+\nn}(\proj(V),d)]^{vir}
\right)\ .
\end{multline*}
The degree of the class 
$$\rho_{n+1}^{d-1-\delta-|\alpha|} \ 
\prod_{i=1}^\nn \rho_{n+i} \psi_{n+i}^{\alpha_i}\
\prod_{{j}=1}^n \widetilde{\rho}_{j}
\ c_{top}(\mathbb{R})$$
is easily computed to be
\begin{multline*}
 d-1-\delta -|\alpha| + \nn +|\alpha|+n +d+g-1  =
\\
g+2d-2+n+\nn -\delta\ .
\end{multline*}
Since the cycle dimension of the virtual class is
$2g+2d-2+n+\nn$,
the push-forward $I(g,d,\alpha)$ has cycle dimension
\begin{eqnarray*}
2g+2d-2+n+m - (g+2d-2 +n+\nn -\delta) &= &g+\delta\\
& = &
 3g-3+n -d\ .
\end{eqnarray*} 
Equivalently,
$I(g,d,\alpha)\in A^d_{\com^*}(\overline{M}_{g,n})$.
Since the class $\rho_{n+1}$ appears
with exponent $$d-\delta -|\alpha|\geq 2,$$
$I(g,d,\alpha)$ vanishes in the non-equivariant limit.

\subsubsection{Localization terms}
\label{lres}
The virtual localization formula of \cite{GP} calculates
$I(g,d,\alpha)$ 
in terms of tautological classes  on the moduli space
$\overline{M}_{g,n}$.
To prove Theorem \ref{gvv}, we will calculate the
restriction of the localization formula to $M_{g,n}^c$.

The localization formula 
expresses $I(g,d,\alpha)$ as a  sum over connected decorated 
graphs $\Gamma$ indexing the $\com^*$-fixed loci of 
$\overline{M}_{g,n+\nn}(\proj(V),d)$.
The vertices of the graphs lie over the
fixed points $\mathsf{p}_1, \mathsf{p}_2 \in \proj(V)$ and are
labelled with genera (which sum over the graph to $g-h^1(\Gamma)$).
The edges of the graphs lie over $\proj^1$ and
are labelled with degrees (which sum over the 
graph to $d$). Finally, the graphs carry
$n+\nn$ markings on the vertices. 
The valence $\text{val}(v)$ of a vertex $v\in \Gamma$
counts both the
incident edges and markings.
The edge valence of $v$
counts only the incident edges.

Only a very restricted subset of graphs 
will yield non-vanishing contributions to $I(g,d,\alpha)$
in the non-equivariant limit.
If a graph $\Gamma$
contains a vertex lying over $\mathsf{p}_1$ of 
edge valence greater than 1, then the contribution of
$\Gamma$ to vanishes
by our choice of linearization
on the bundle $\mathbb{R}$. 
A vertex over $\mathsf{p}_1$ of edge valence greater than 1
yields a trivial Chern root of $\mathbb{R}$ (with trivial weight 0) in the
numerator of the localization formula to force the vanishing.

By the above vanishing, only {\em comb} graphs $\Gamma$ 
contribute to $I(g,d,\alpha)$. Comb graphs contain
$\ell\leq d$ vertices lying over $\mathsf{p}_1$ each
connected by a distinct edge to a unique vertex lying
over $\mathsf{p}_2$.

If $\Gamma$ contains
a vertex over $\mathsf{p}_1$ of positive genus, then the
restriction to $M_{g,n}^c$ of the
contribution of $\Gamma$ to $I(g,d,\alpha)$ vanishes
by the following argument.
Let $v$ be a genus $g(v)>0$
vertex lying over $\mathsf{p}_1$. The integrand term $c_{top}(\mathbb{R})$
yields a factor $c_{g(v)}(\hodge^*)$ with trivial $\com^*$-weight on the
genus $g(v)$ moduli space corresponding to the vertex $v$.
Since
$$\lambda_{g(v)} | _{M_{g(v),\text{val}(v)}^c} = 0$$
by \cite{VdG}, the required vanishing holds.

The linearizations of the classes 
$\rho_i$ and
$\widetilde{\rho}_{j}$  
place restrictions on
the marking distribution.
Since the class $\widetilde{\rho}_{j}$ is
obtained from $\oh_{\proj(V)}(1)$ with linearization
$[0,-1]$, the first $n$  markings must lie on the
unique vertex over  over $\mathsf{p}_2$.
Since the class $\rho_i$ is
obtained from $\oh_{\proj(V)}(1)$ with linearization
$[1,0]$, the last $\nn$ markings must lie on vertices over $\mathsf{p}_1$.

Finally, we claim the last $\nn$ 
markings of $\Gamma$ must lie on {distinct}
vertices over $\mathsf{p}_1$ for nonvanishing contribution to
$I(g,d,\alpha)$.
Let $v$ be a vertex over $\mathsf{p}_1$ (with $g(v)=0$).
If $v$ carries at least two markings, the 
fixed locus corresponding to $\Gamma$ 
contains a product factor $\overline{M}_{0,r+1}$
where $r$ is the number of markings incident to $v$.
The classes $\psi_{n+i}^{\alpha_i}$
 carry trivial
$\com^*$-weight. Moreover, as each
$\alpha_i > 0$ for $i>1$, we see the sum of the
$\alpha_i$ as $i$ ranges over the set of markings
incident to $v$  is at least $r-1$. Since
the sum exceeds the dimension of $\overline{M}_{0,r+1}$,
the graph contribution to  $I(g,d,\alpha)$ vanishes.

The proof of  the main result about the 
localization terms for $I(g,d,\alpha)$ is
now complete.

\begin{Proposition}
\label{qqqq}
The restriction of 
$I(g,d,\alpha)$ to $M_{g,n}^c$ is
 expressed via the virtual localization formula as
a sum over 
genus $g$, degree $d$, marked comb graphs $\Gamma$ satisfying:
\begin{enumerate}
\item[(i)] all vertices over $\mathsf{p}_1$ are of genus 0,

\item[(ii)] the unique vertex over $\mathsf{p}_2$ carries all of the first $n$
markings,
\item[(iii)] the last $m$ markings all lie over $\mathsf{p}_1$,
\item[(iv)] 
each vertex over $\mathsf{p}_1$ carries at most 1 of the last $m$ markings.

\end{enumerate}
\end{Proposition}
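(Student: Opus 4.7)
The plan is to apply the virtual localization formula of Graber--Pandharipande \cite{GP} to the $\com^*$-equivariant push-forward $I(g,d,\alpha)$ and then systematically eliminate those fixed loci whose contributions either vanish identically or vanish after restriction to $M_{g,n}^c$. The $\com^*$-fixed loci of $\overline{M}_{g,n+\nn}(\proj(V),d)$ are indexed by decorated bipartite graphs $\Gamma$ with vertices over $\mathsf{p}_1,\mathsf{p}_2$ carrying genera, edges carrying degrees, and $n+\nn$ labeled markings distributed on the vertices. I will examine the integrand factor by factor to cut the sum down to the four classes listed.

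First I would handle the bundle $\mathbb{R}=R^1\pi_*(\mu^*\oh_{\proj(V)}(-1))$. With the linearization $[0,1]$, the fiber of $\mathbb{R}$ at a $\mathsf{p}_1$-vertex $v$ of edge valence $e(v)$ contributes, via the normalization sequence, a factor $c_{g(v)}(\hodge^*)$ from the vertex moduli times a trivial rank $e(v)-1$ summand of weight zero. The latter produces a Chern root with weight $0$, which kills the contribution unless $e(v)=1$; this forces $\Gamma$ to be a comb with all but one vertex (the central one over $\mathsf{p}_2$) of edge valence one. The surviving factor $c_{g(v)}(\hodge^*)=\lambda_{g(v)}$ on the $\mathsf{p}_1$-vertex, combined with the $\lambda_g$ needed to restrict to compact type via van der Geer's vanishing $\lambda_{g(v)}|_{M^c_{g(v),\mathrm{val}(v)}}=0$ from \cite{VdG}, kills every comb in which a $\mathsf{p}_1$-vertex has positive genus. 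This yields conditions (i) and the comb structure.

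Next I would read off the marking distribution from the $\rho$ and $\widetilde\rho$ linearizations. Since $\widetilde\rho_j$ comes from $\oh_{\proj(V)}(1)$ linearized by $[0,-1]$, it restricts to $0$ at $\mathsf{p}_2$, so the pullback $\ev_j^*\oh(1)$ vanishes equivariantly on any fixed locus where the $j$-th marking sits over $\mathsf{p}_2$; equivalently the factor $\widetilde\rho_j$ forces marking $j$ (for $j\le n$) to lie on the unique $\mathsf{p}_2$-vertex, giving (ii). Symmetrically, $\rho_{n+i}$ vanishes at $\mathsf{p}_1$, so each of the last $\nn$ markings is forced onto a $\mathsf{p}_1$-vertex, giving (iii).

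The main obstacle---and the most delicate step---is the distinctness assertion (iv). Here I would carry out the local dimension count on a $\mathsf{p}_1$-vertex $v$ bearing $r\ge 2$ of the last $m$ markings: the corresponding moduli factor is $\overline{M}_{0,r+1}$ of dimension $r-2$, and on it the classes $\psi_{n+i}^{\alpha_i}$ (which have trivial equivariant weight since $\psi$ pulls back from the genus zero factor and the incident edge contributes only to the node $\psi$) accumulate to a monomial of degree $\sum_{i\in v}\alpha_i$. Using condition (ii) that $\alpha_i>0$ for $i>1$, at most one of the indices incident to $v$ can be the distinguished index $1$, so the total $\psi$-exponent is at least $r-1>r-2=\dim\overline{M}_{0,r+1}$, forcing the local contribution to vanish. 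This establishes (iv) and completes the list of surviving graphs. Once these four conditions are assembled, the Proposition follows by collecting only the combs matching (i)--(iv) in the localization sum.
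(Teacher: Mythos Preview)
Your approach is essentially identical to the paper's: localization, then successive elimination via (a) the trivial weight-$0$ Chern root of $\mathbb{R}$ at $\mathsf{p}_1$-vertices of edge valence $>1$, (b) van der Geer's vanishing of $\lambda_{g(v)}$ on compact type for positive-genus $\mathsf{p}_1$-vertices, (c) the $\rho/\widetilde\rho$ linearizations to pin the markings, and (d) the dimension count on $\overline{M}_{0,r+1}$ using $\alpha_i>0$ for $i>1$.

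There is one slip to fix. In your treatment of (ii)--(iii) you have the fixed points swapped and the logic is internally inconsistent. With linearization $[0,-1]$ the equivariant class $\widetilde\rho_j$ has weight $0$ at $\mathsf{p}_1$ (not $\mathsf{p}_2$), so $\widetilde\rho_j$ vanishes on fixed loci where marking $j$ lies over $\mathsf{p}_1$; this is what forces $j\le n$ onto the $\mathsf{p}_2$-vertex. Likewise, $\rho_{n+i}$ with linearization $[1,0]$ has weight $0$ at $\mathsf{p}_2$, so it vanishes when the marking lies over $\mathsf{p}_2$, forcing the last $m$ markings onto $\mathsf{p}_1$-vertices. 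Your stated conclusions are correct, but the sentences ``restricts to $0$ at $\mathsf{p}_2$'' and ``$\rho_{n+i}$ vanishes at $\mathsf{p}_1$'' are backwards and, as written, would send the markings to the wrong side.
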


\subsubsection{Formulas}
\label{exx}
The precise contributions of allowable graphs
$\Gamma$ to the non-equivariant limit of $I(g,d,\alpha)$ 
are now calculated.

Let $\Gamma$ be a
genus $g$, degree $d$, comb graph  with $n+\nn$ 
markings satisfying
conditions (i-iv) of Proposition \ref{qqqq}.
By condition (iv), $\Gamma$ must have  $\ell\geq \nn$ edges.
$\Gamma$ may be described uniquely
by the data 
\begin{equation}
\label{alll}
(p_1,\ldots, p_\nn) \scup \{ p_{\nn+1}, \ldots, p_\ell\},
\end{equation}
satisfying:
$$p_i >0,  \ \ \ \sum_{i=1}^\ell
p_i =d.$$
The elements of the ordered $\nn$-tuple $(p_1,\ldots, p_\nn)$
correspond
to the degree assignments of the edges incident
to the vertices marked by the last $\nn$ markings. 
The elements of the unordered partition
$\{ p_{\nn+1}, \ldots, p_\ell\}$
correspond to the degrees of edges incident to
the unmarked vertices over $\mathsf{p}_1$.
The group of graph 
automorphisms  is
$$\text{Aut}(\Gamma)= \text{Aut}(\{ p_{\nn+1}, \ldots, p_\ell\})\ .$$

By a direct application of the virtual localization
formula of \cite{GP}, we find the contribution of the graph
(\ref{alll})
to the normalized{\footnote{The
parallel equation on page 106 of \cite{FP}
has a sign error in the normalization.
Instead of $(-1)^{g+1} I(g,d,\alpha)$ there,
the normalization should be $(-1)^{g+1+|\alpha|+\ell(\alpha)}I(g,d,\alpha)$. 
The sign change makes no difference.}}
push-forward
$$(-1)^{g+1+|\alpha|+n+\nn}\cdot I(g,d, \alpha)$$
equals
\begin{equation*}
\frac{1}{|\text{Aut}(\Gamma)|} \ 
\prod_{i=1}^\nn p_i^{-\alpha_i} \prod_{i=\nn+1}^{\ell} (-p_i)^{-1}
\prod_{i=1}^\ell \frac{p_i^{p_i}}{p_i!} \ \ \lan p_1,\ldots,p_\ell\ran\ .
\end{equation*}
Hence, the vanishing of $I(g,d,\alpha)$ yields the relation
\begin{equation*}
\sum_{\Gamma} \frac{1}{|\text{Aut}(\Gamma)|} \ 
\prod_{i=1}^\nn p_i^{-\alpha_i} \prod_{i=\nn+1}^{\ell} (-p_i)^{-1}
\prod_{i=1}^\ell \frac{p_i^{p_i}}{p_i!} \ \ \lan p_1,\ldots,p_\ell\ran= 0\ ,
\end{equation*}
where the sum is over all  graphs (\ref{alll}). \qed

\begin{Question}
Are the relations of Theorem \ref{gvv} equivalent to
relations constructed in Section 3 of \cite{MOP}
via stable quotients? 
\end{Question}

\section{Rank analysis}
\subsection{Matrix of relations}
Theorem \ref{gvv} yields relations in $\kappa^d(M_{g,n}^c)$,
indexed by $\alpha=(\alpha_1, \ldots,\alpha_\nn)$ satisfying conditions (i-ii)
of Section \ref{indd} with
$$\delta = 2g-3+n-d \geq 0 . $$
We rewrite the relation obtained from the vanishing 
of $I(g,d,\alpha)$ as
\begin{equation} \label{syss}
\sum_{\mathbf{p}\in P(d)} \mathsf{C}_{\alpha}^{\mathbf{p}} \ \langle
\mathbf{p} \rangle = 0 \ .
\end{equation}
The coefficients are
$$\mathsf{C}_{\alpha}^{\mathbf{p}}= 
\frac{1}{|\text{Aut}(\mathbf{p})|} \prod_{i=1}^\ell\frac{p_i^{p_i}}{p_i!}
\ \sum_{\phi}\  \prod_{i=1}^\nn p_{\phi(i)}^{-\alpha_i} 
 \prod_{j\in \text{Im}(\phi)^c} (-p_j)^{-1}\ ,$$
where the sum is over all injections 
$$\phi: \{ 1, \ldots,\nn \} \rarr \{ 1, \ldots, \ell \}  $$
and 
$$\text{Im}(\phi)^c \subset \{1, \ldots, \ell\}$$
is the complement of the image of $\phi$.

To prove Proposition 2, 
we will show the system \eqref{syss}  
is of rank
at least $|P(d)| - |P(d,\delta+1)|$.
The claim is empty unless
$0 \leq \delta \leq d-2$.

\subsection{Ordering}
For $0\leq \delta \leq d-2$,
define the subset $P_\delta(d) \subset P(d)$
by removing partitions of length at most $\delta+1$,
$$P_\delta(d) = P(d) \setminus P(d,\delta+1)\ .$$
We order $P_\delta(d)$ by the following rules
\begin{enumerate}
\item[$\bullet$] longer partitions appear before shorter partitions,
\item[$\bullet$] for partitions of the same length, we use the
lexicographic ordering with
larger parts{\footnote{Remember the parts of
$\mathbf{p}=(p_1,\ldots, p_\ell)$ are
ordered by $p_1\geq \ldots \geq p_\ell$.}}
appearing before smaller parts.
\end{enumerate}
For example, 
the ordered list of  the 10 elements of $P_0(6)$ is
$$(1^6), \ (2,1^4), \ (3,1^3), \ (2^2,1^2), \ (4,1^2), \ (3,2,1), \ (2^3),
\ (5,1), \ (4,2), \ (3,3) \ .$$

Given a partition $\mathbf{p}\in P{(d)}$, let $\widehat{\mathbf{p}}$
be the partition obtained removing all parts equal to $1$.
For example,
$$\widehat{(1^6)} = \emptyset, \ \ \widehat{(3,2,1)} = (3,2)\ .$$
Let $\mathbf{p}^{-}$ be the partition obtained by lowering
all the parts of $\mathbf{p}$ by 1,
$$(1^6)^- = \emptyset, \ \  (3,2,1)^- = (2,1)\ .$$
If $\mathbf{p}$ has length $\ell$, then 
$$ \mathbf{p}^- \in P(d-\ell). \ $$

To each partition $\mathbf{p} \in P_\delta(d)$,
we associate data
$\alpha[\mathbf{p}]$
satisfying conditions (i)-(ii) with respect to $\delta$
by the following rules.
The special designation
$$\alpha[(1^d)] = (0) $$
is given.
Otherwise
$$\alpha[\mathbf{p}]
 = \mathbf{p}^-\ . $$
We note condition (i) of Section \ref{indd},
$$|\alpha[\mathbf{p}]| \leq d -2- \delta\ , $$
is satisfied in all cases.

Let $\mathsf{M}_\delta(d)$ be the square matrix indexed
by the
ordered set $P_\delta(d)$ with
elements
$$M_\delta(d)[ \mathbf{p}, \mathbf{q}] = 
\mathsf{C}_{\alpha[\mathbf{p}]}^{\mathbf{q}} \       .$$
The rank of the system \eqref{syss} is at least
$$|P_\delta(d)| = |P(d)| - |P(d,\delta+1)|$$
by the following nonsingularity result proven in 
Sections \ref{scc} - \ref{kjh} below.

\begin{Proposition} \label{nlll}
For $0\leq \delta \leq d-2$, the matrix
$\mathsf{M}_\delta(d)$ is nonsingular.
\end{Proposition}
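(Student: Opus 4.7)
The plan is to reduce Proposition \ref{nlll} to the nonsingularity of an explicit auxiliary matrix and then to establish a (generalized) upper-triangularity of that matrix with respect to the given ordering of $P_\delta(d)$.

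As a first step, I would simplify the matrix entries. The identity
$$\prod_{j\in\text{Im}(\phi)^c}(-q_j)^{-1} = (-1)^{\ell(\mathbf{q})-m}\prod_{j=1}^{\ell(\mathbf{q})}q_j^{-1}\prod_{i=1}^{m}q_{\phi(i)},$$
where $m=\ell(\alpha[\mathbf{p}])$, allows one to factor
$$\mathsf{C}^{\mathbf{q}}_{\alpha[\mathbf{p}]} = \frac{(-1)^{\ell(\mathbf{q})}}{|\Aut(\mathbf{q})|}\prod_i \frac{q_i^{q_i-1}}{q_i!}\cdot (-1)^{m(\mathbf{p})}\cdot\Phi(\mathbf{p},\mathbf{q}),$$
where
$$\Phi(\mathbf{p},\mathbf{q}) = \sum_{\phi}\prod_{i=1}^{m(\mathbf{p})}q_{\phi(i)}^{1-\alpha[\mathbf{p}]_i}.$$
The column factor $\tfrac{(-1)^{\ell(\mathbf{q})}}{|\Aut(\mathbf{q})|}\prod_i q_i^{q_i-1}/q_i!$ and the row sign $(-1)^{m(\mathbf{p})}$ are nowhere zero, so nonsingularity of $\mathsf{M}_\delta(d)$ reduces to nonsingularity of the reduced matrix $[\Phi(\mathbf{p},\mathbf{q})]_{\mathbf{p},\mathbf{q}\in P_\delta(d)}$.

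Next, I would analyze the rows of $\Phi$. The row for $\mathbf{p}=(1^d)$ is the constant $d$; for $\mathbf{p}\neq(1^d)$, substituting $\alpha[\mathbf{p}]_i=p_i-1$ yields
$$\Phi(\mathbf{p},\mathbf{q}) = \sum_{\phi}\prod_{i=1}^{m(\mathbf{p})}q_{\phi(i)}^{2-p_i},$$
a symmetric function in the parts of $\mathbf{q}$ whose exponents $2-p_i\leq 0$ are indexed by the parts $p_i\geq 2$ of $\mathbf{p}$. The plan is then to establish upper-triangularity of $[\Phi(\mathbf{p},\mathbf{q})]$ in the given ordering \emph{after} a sequence of row reductions annihilating, stage by stage, the contributions of earlier rows to later rows: first the $(1^d)$ row's constant contribution is eliminated from all other rows, and then one iterates, using that rows indexed by distinct $\mathbf{p}$ yield distinct exponent multisets. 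On the diagonal, I would isolate the ``leading injection'' $\phi$ matching the large parts of $\mathbf{p}$ to themselves, which contributes a nonzero term proportional to $\prod_{i=1}^{m(\mathbf{p})}p_i^{2-p_i}$.

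The main obstacle is the triangularization step. The matrix $[\Phi(\mathbf{p},\mathbf{q})]$ is not literally triangular in the given order — as seen already in the small case $d=3$, $\delta=0$, where a single row operation is required to expose the triangular structure — so the proof must explicitly identify the correct row reductions and verify that the residual diagonal entries do not vanish. I would proceed by induction on the ordering of $P_\delta(d)$: assuming triangularization has been achieved up to some $\mathbf{p}$, express the residual row corresponding to $\mathbf{p}$ as an explicit symmetric-function combination of power sums $p_{2-p_i}(\mathbf{q})$, controlled by the exponent sequence $(2-p_1,\ldots,2-p_m)$, and use the fact that distinct partitions in $P_\delta(d)$ produce distinct such sequences to conclude. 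This strategy mirrors, and generalizes out of the socle, the argument used for Proposition~3 of \cite{FP}.
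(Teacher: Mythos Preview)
Your scaling reduction is exactly what the paper does: dividing out the column factor $\frac{1}{|\Aut(\mathbf{q})|}\prod q_i^{q_i-1}/q_i!$ and the row sign produces precisely the paper's matrix $\mathsf{X}_\delta(d)$, so up to that point the approaches coincide.

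The gap is in the triangularization. You correctly identify it as the main obstacle, but the plan you sketch does not close it. Writing $\Phi(\mathbf{p},\mathbf{q})$ as an inclusion--exclusion polynomial in power sums $p_k(\mathbf{q})$ (with $k\le 0$) is fine, but the inference ``distinct partitions give distinct exponent sequences, hence the rows become triangular after reduction'' is not justified: you are evaluating these symmetric functions only at the finite set of points $\mathbf{q}\in P_\delta(d)$, not as abstract polynomials, and no argument is given that the evaluation map preserves enough independence, nor that the residual diagonal entries are nonzero. The $d=3$ example you mention already shows the matrix is not triangular on the nose, and for larger $d$ (see the displayed $\mathsf{X}_0(6)$) many row operations are needed; you have not specified any of them.

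The paper handles this step quite differently. Rather than row reductions, it constructs an explicit upper-triangular matrix $\mathsf{Y}_0(d)$ of \emph{column} operations, with entries built from multinomial coefficients and tree-counting factors $q_i^{\ell_i-2}\prod p_{i[j]}^{p_{i[j]}-1}$, and shows $\mathsf{X}_0(d)\cdot\mathsf{Y}_0(d)$ is lower-triangular with $\pm 1$ diagonal. The verification is not elementary combinatorics: it passes through generating functions $\mathsf{F}_{\alpha_1,\ldots,\alpha_m}=\langle \exp(-Z_1)\,Z_{\alpha_1}\cdots Z_{\alpha_m}\rangle$ and relies on three lemmas --- polynomiality of $\mathsf{F}_\alpha$ with a degree bound, an explicit leading coefficient for $\mathsf{F}_{\alpha_1}$ computed via localization on $\overline{M}_{0,1}(\mathbb{P}^1,k)$ and the topological recursion relation, and $\mathsf{F}_\emptyset=1-x$. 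That geometric input is what pins down both the vanishing above the diagonal and the nonvanishing on it. Your proposal would need a substitute for this machinery, and none is supplied.
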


Proposition \ref{nlll} implies Proposition 2 and
thus Theorem 1. Moreover, Proposition \ref{nlll}
provides a new approach to \cite{FP}.

\subsection{Scaling} \label{scc}
Let $\mathsf{X}_\delta(d)$ be the square matrix
 indexed by the
ordered set $P_\delta(d)$ with
elements
\begin{eqnarray*}
\mathsf{X}_\delta(d)[ (1)^d, \mathbf{q}] & = & 
(-1)^{\ell(\mathbf{q})-1} d
\\
\mathsf{X}_\delta(d)[ \mathbf{p}\neq (1)^d, \mathbf{q}] & = & 
 \sum_{\phi}\  
(-1)^{\ell(\mathbf{q})- \ell(\widehat{\mathbf{p}})}
\prod_{i=1}^{\ell(\widehat{\mathbf{p}})} q_{\phi(i)}^{-\widehat{p}_i+2} \ ,
\end{eqnarray*}
where the sum is over all injections 
$$\phi: \{ 1, \ldots,\ell(\widehat{\mathbf{p}}) \} \rarr 
\{ 1, \ldots, \ell(\mathbf{q}) \}\ .  $$
For example, $\mathsf{X}_{0}(6)$ is
\vspace{10pt}
$$
\left( \begin{array}{rrrrrrrrrr}

 -6 &    6  &    -6 &   -6 &   6 &   6 &   6 &   -6 &   -6 &  -6 \vspace{4pt}\\
 -6 &   5 &   -4 &   -4 &   3 &   3 &   3 &   -2 &   -2 &   -2  \vspace{4pt} \\
 -6 & \frac{9}{2} & -\frac{10}{3} & -3 & \frac{9}{4} & 
\frac{11}{6} & \frac{3}{2} & -\frac{6}{5} & -\frac{3}{4} & -\frac{2}{3} 
\vspace{4pt}
\\
 30 &   -20 &   12 &   12 &   -6 &   -6 &   -6 &   2 &  2 &  2 
\vspace{4pt}
\\
       -6 & \frac{17}{4} &  -\frac{28}{9} &  -\frac{5}{2} &
 \frac{33}{16} & \frac{49}{36}& \frac{3}{4} &- \frac{26}{25}
 & -\frac{5}{16} & -\frac{2}{9} \vspace{4pt}\\
30 &  -18 &  10 &  9 &  -\frac{9}{2} &  -\frac{11}{3} &  -3 
& \frac{6}{5} & \frac{3}{4} & \frac{2}{3} \vspace{4pt}\vspace{4pt}\\
-120 &   60 &   -24 &   -24 &   6 &   6 &   6 &   0 &  0 &  0 \vspace{4pt}\\
-6 & \frac{33}{8} & - \frac{82}{27} & -\frac{9}{4} &
\frac{129}{64} & \frac{251}{216} & \frac{3}{8} & 
-\frac{126}{125} & -\frac{9}{64} & -\frac{2}{27}\vspace{4pt} \\
30 & -17 & \frac{28}{3} & \frac{15}{2} & -\frac{33}{8} &
-\frac{49}{18} & -\frac{3}{2} & \frac{26}{25} & \frac{5}{16} &
\frac{2}{9} \vspace{4pt} \\
30 &  -16 &  8 &  \frac{13}{2} &  -3 &  -2 &  -\frac{3}{2} &
 \frac{2}{5} & \frac{1}{4} & \frac{2}{9} 
\end{array}
\right) \ .
$$
\vspace{10pt}

The matrix 
$\mathsf{X}_\delta(d)$
is obtained from $\mathsf{M}_\delta(d)$ 
by dividing each
column corresponding to $\mathbf{q}$ by 
$$\frac{1}{|\text{Aut}(\mathbf{q})|} \prod_{i=1}^{\ell(\mathbf{q})}
\frac{q_i^{q_i-1}}{q_i!}\ .$$
Hence, 
$\mathsf{X}_\delta(d)$ is nonsingular if and only if
$\mathsf{M}_\delta(d)$ is nonsingular.

\subsection{Elimination} \label{ellm}
Our strategy for proving 
Proposition \ref{nlll} is to find an upper-triangular square
matrix $\mathsf{Y}_0(d)$ for which the product
\begin{equation}\label{hh5}
\mathsf{X}_0(d) \cdot \mathsf{Y}_0(d)
\end{equation}
is lower-triangular with $\pm1$'s on the diagonal.
Since $\mathsf{X}_\delta(d)$ for
$$0 \leq \delta \leq d-2$$
occurs
as an upper left minor of $\mathsf{X}_0(d)$, the
lower-triangularity of the product 
\eqref{hh5} will establish Proposition \ref{nlll} for the
full range of $\delta$ values.

We define $\mathsf{Y}_0(d)$ to be the square matrix
 indexed by the
ordered set $P_0(d)$ given by the following rules.
The upper left corner is
$$
\mathsf{Y}_0(d)[ (1^d),(1^d) ] = \frac{1}{d} 
$$
If at least one of $\{\mathbf{p}, \mathbf{q}\}$ is not equal to
$(1^d)$, then the matrix elements are
\begin{multline*}
\mathsf{Y}_0(d)[ {\mathbf{p}}, {\mathbf{q}}]  = \\
\frac{1}{|\text{Aut}({\mathbf{p}})|} 
\frac{1}{|\text{Aut}({\mathbf{\widehat{q}}})|}
 \sum_{\theta}\  
\prod_{i=1}^{\ell({\mathbf{q}})} 
\binom{{q}_i}{{p}_{i[1]}, \ldots, {p}_{i[\ell_i]}}
{q}_i^{ \ell_i-2}
\prod_{j=1}^{\ell_i} {p}_{ij}^{{p}_{ij}-1}
 \ ,
\end{multline*}
where the sum is over all functions
$$\theta: \{ 1, \ldots,\ell({\mathbf{p}}) \} \rarr 
\{ 1, \ldots, \ell(\mathbf{q}) \}   $$
with
$$\theta^{-1}(i) = \{ {i[1]}, \ldots, {i[\ell_i]} \}$$
satisfying
$${q}_i = \sum_{j=1}^{\ell_i} {p}_{i[j]}\  . $$
For example, $\mathsf{Y}_{0}(6)$ is
\vspace{10pt}
$$
\left( \begin{array}{rrrrrrrrrr}
\frac{1}{6} & 1 & 3 & \frac{1}{2}&  16 &    3 &  \frac{1}{6} &
    125 &    16 &  \frac{9}{2} \vspace{4pt} \\
  0 &    1 &   6 &     1 &    48 &    9 &    \frac{1}{2} &   500&
    64 &   18 \vspace{4pt}\\
  0 &    0 &  3 &     0 &  36&    3&     0 &  450&    36 &    9 \vspace{4pt}\\
     0 &    0 &    0 & \frac{1}{2} &   12&    6 &  \frac{1}{2} &
   300 &    60 &   18 \vspace{4pt}\\
  0 &    0 &    0 &    0 &    16&    0&     0 & 320&    16&     0 
\vspace{4pt} \\
  0 &    0 &    0 &     0 &      0&    3&     0 &   180&    36&    18 
\vspace{4pt}\\
   0 &    0 &    0 &    0  &     0&    0&  \frac{1}{6}&       0&    12&     0 
\vspace{4pt}\\
   0 &    0  &   0  &   0  &     0 &   0 &    0&      125&     0&     
0 \vspace{4pt}\\
   0 &    0  &   0  &   0 &      0 &   0 &    0 &        0 &   16  &   
0 \vspace{4pt}\\
    0 &    0 &    0  &   0 &      0&    0&     0&         0&     0&
\frac{9}{2}
\end{array} \right) .
$$

By the conditions on $\theta$ in the definition, 
$\mathsf{Y}_0(d)$ is easily seen to be upper-triangular.

\subsection{Generating functions}
Let $\Q[t]$ denote the
polynomial ring in infinitely many variables
$$t=\{t_1, t_2, t_3, \ldots\} \ . $$
Define a $\Q$-linear function
$$\lan \ \ran : \Q[t] \rarr \Q$$
by the equations $\lan 1 \ran=1$ and
$$\lan t_{d_1} t_{d_2} \cdots t_{d_k} \ran = 
(d_1+ d_2+\ldots+ d_k)^{k-3} \ .$$
We may extend $\lan \ \ran$ uniquely to define a $x$-linear function:
$$\lan \ \ran: \Q[t] [[x]] \rarr \Q[[x]].$$

For each non-negative integer $i$, let
$$Z_i(t,x) = \sum_{j>0} x^j t_j \frac{j^{j-i}}{j!} \in \Q[t][[x]].$$
Applying the bracket, we define
$$\mathsf{F}_{\alpha_1,\ldots,\alpha_m} =
\lan \exp(-Z_1) \cdot Z_{\alpha_1} \cdots Z_{\alpha_m} \ran\ \in
\mathbb{Q}[[x]] .$$

\begin{Lemma}\label{yh2}
Let
$\alpha=(\alpha_1, \ldots, \alpha_n)$
be a  non-empty sequence of non-negative
integers satisfying $\alpha_i>0$ for $i>1$. 
The series
$$\mathsf{F}_{\alpha_1,\ldots,\alpha_m} 
\in \Q[[x]]$$
is a {\em polynomial} of degree at most $1+\sum_{i=1}^m \alpha_i$
in $x$.
\end{Lemma}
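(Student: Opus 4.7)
My approach is to reduce the $x^N$ coefficient of $\mathsf{F}_{\alpha_1,\ldots,\alpha_m}$ to an explicit expression involving the Cayley tree function $T(x) = \sum_{j\geq 1} j^{j-1}\, x^j/j!$ (which satisfies $T = xe^T$), and then invoke Lagrange inversion. The first move exploits the fact that $\lan t_{d_1}\cdots t_{d_k}\ran = (d_1+\cdots+d_k)^{k-3}$ depends on the indices only through $k$ and the total $D = \sum d_i$. When extracting the $x^N$ coefficient, all contributing monomials have $D = N$, so the bracket collapses to the common factor $N^{k+m-3}$. Summing over the number $k$ of $Z_1$-factors produced by $e^{-Z_1}$ and recognizing $\sum_{k\geq 0}(-N)^k T(x)^k/k! = e^{-NT(x)}$, I would arrive at
$$[x^N]\, \mathsf{F}_{\alpha_1,\ldots,\alpha_m} \;=\; N^{m-3}\cdot[x^N]\bigl(G(x)\, e^{-NT(x)}\bigr),$$
where $G(x) := Z_{\alpha_1}(x)\cdots Z_{\alpha_m}(x)$ is evaluated at $t_j = 1$ for all $j$.

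Next I would rewrite each $Z_i(x)|_{t_j=1}$ as a function of $T$. A direct computation gives $Z_0 = xT'(x) = T/(1-T)$, and iterating the identity $x\partial_x Z_i = Z_{i-1}$ together with the substitution $dx/x = (1-T)/T\, dT$ shows by induction that $Z_i|_{t_j=1}$ is a polynomial in $T$ of degree exactly $i$ with no constant term, for every $i\geq 1$. Writing $Q(T)$ for $G$ re-expressed as a rational function of $T$, this splits into two cases: if $\alpha_1\geq 1$, then $Q(T)$ is a polynomial of degree at most $\sum \alpha_i$; if $\alpha_1 = 0$, then $Q(T) = T\,R(T)/(1-T)$ where $R$ is a polynomial of degree $\sum \alpha_i$.

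The crucial step is Lagrange inversion: from $T = xe^T$, for any formal series $H(T)$ one has $[x^N]\, H(T(x)) = \tfrac{1}{N}\,[T^{N-1}]\, H'(T)\, e^{NT}$. Applied to $H(T) = Q(T)\,e^{-NT}$ one finds $H'(T)\,e^{NT} = Q'(T) - N\,Q(T)$, so $[x^N]\bigl(Q(T)\,e^{-NT}\bigr) = q_N - q_{N-1}$, where $q_k := [T^k]\, Q(T)$. Combining the two previous paragraphs yields
$$[x^N]\, \mathsf{F}_{\alpha_1,\ldots,\alpha_m} \;=\; N^{m-3}\bigl(q_N - q_{N-1}\bigr).$$

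To finish I would analyze when $(q_k)$ stabilizes. If $\alpha_1 \geq 1$, then $Q$ is a polynomial of degree $\leq \sum \alpha_i$, so $q_N = 0$ for $N > \sum \alpha_i$ and hence $q_N - q_{N-1} = 0$ for $N > 1 + \sum \alpha_i$. If $\alpha_1 = 0$, expanding $T\,R(T)/(1-T) = \sum_{k\geq 1}\bigl(\sum_{i\leq k-1} R_i\bigr)\, T^k$ shows that $q_k = R(1)$ is constant for $k \geq 1 + \deg R = 1 + \sum \alpha_i$, so again $q_N - q_{N-1} = 0$ for $N > 1 + \sum \alpha_i$. This yields the claimed polynomiality. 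The main technical care is needed in the first step: identifying the combinatorial double sum with $G(x)\,e^{-NT(x)}$ and cleanly factoring out $N^{m-3}$; after that the argument reduces to tree-function algebra in the style of \cite{FP}.
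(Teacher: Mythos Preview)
Your argument is correct. The reduction to $[x^N]\mathsf{F}_\alpha = N^{m-3}(q_N-q_{N-1})$ via Lagrange inversion applied to $H(T)=Q(T)e^{-NT}$ with $T=xe^T$ is clean, and your case analysis for when the sequence $(q_k)$ stabilizes is accurate in both the $\alpha_1\geq 1$ (polynomial $Q$) and $\alpha_1=0$ (simple pole at $T=1$) cases. One small point worth tidying: your bound $\deg Q \leq \sum\alpha_i$ is in fact an equality, since each $Z_i|_{t_j=1}$ has leading $T$-term $(-1)^{i-1}T^i/i!$, but the inequality is all you need.

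Your route is genuinely different from the one the paper invokes. The paper does not prove Lemma~\ref{yh2} directly; it cites the localization argument in \cite{FP}, Section~1.7, where the $x^N$ coefficient of $\mathsf{F}_\alpha$ is identified with (a normalization of) a push-forward
\[
\nu_*\Big(\rho_1\psi_1^{\alpha_1}\cdots\rho_m\psi_m^{\alpha_m}\ c_{top}(\mathbb{R})\cap[\overline{M}_{0,m}(\mathbb{P}^1,N)]^{vir}\Big)
\]
which vanishes for dimension reasons once $N>1+\sum\alpha_i$ --- exactly parallel to how $I(g,d,\alpha)$ is treated in Section~\ref{pof3}. Your argument replaces this geometric vanishing with pure generating-function algebra: recognizing $Z_1|_{t_j=1}=T$, expressing the higher $Z_i$ as polynomials in $T$ via $x\partial_x Z_i=Z_{i-1}$, and collapsing everything with Lagrange inversion. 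The advantage is that your proof is entirely self-contained and elementary; the localization approach, on the other hand, makes transparent \emph{why} such a polynomiality bound should exist (it is a virtual dimension count) and fits seamlessly into the paper's overall strategy of extracting relations from vanishing equivariant push-forwards.
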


\begin{Lemma} \label{yh3}
Let $\alpha_1\geq 0$. 
Then,
$$
\mathsf{F}_{\alpha_1} = 
 \frac{(-1)^{\alpha_1}}{(1+\alpha_1) (1+\alpha_1)!} x^{1+\alpha_1} + \ldots$$
where the dots stand for lower order terms.
\end{Lemma}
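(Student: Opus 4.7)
The plan is to obtain a closed form for $[x^m]\mathsf{F}_{\alpha_1}$ by recognizing the combinatorial sum that defines it as an expression involving the tree function
\[
T(x)=\sum_{n\geq 1}\frac{n^{n-1}}{n!}x^n,\qquad T=xe^{T},
\]
and then using Lagrange inversion. First I would expand
\[
\exp(-Z_1)\cdot Z_{\alpha_1}=\sum_{k\geq 0}\frac{(-1)^k}{k!}Z_1^k Z_{\alpha_1}
\]
and apply $\lan\,\ran$ termwise. A typical monomial $t_{j_1}\cdots t_{j_k}t_j$ in $Z_1^kZ_{\alpha_1}$ contributes the weight $(j_1+\cdots+j_k+j)^{k-2}=m^{k-2}$ with $m:=j_1+\cdots+j_k+j$. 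Setting $S_\alpha(x):=\sum_{j\geq 1}\frac{j^{j-\alpha}}{j!}x^j$ (so that $S_1=T$), the sum reorganizes into
\[
[x^m]\mathsf{F}_{\alpha_1}=\frac{1}{m^2}\,[x^m]\bigl(e^{-mT(x)}\,S_{\alpha_1}(x)\bigr).
\]

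Next I would express $S_{\alpha_1}(x)$ as a function of $T$. The identity $x\,dS_{\alpha+1}/dx=S_\alpha$, combined with the substitution $x=Te^{-T}$ (so that $dx/dT=(1-T)e^{-T}$), yields the recursion
\[
S_{\alpha+1}'(T)=\frac{1-T}{T}\,S_\alpha(T),\qquad S_1(T)=T.
\]
Hence for $\alpha\geq 1$, $S_\alpha(T)$ is a polynomial of degree $\alpha$ in $T$, and comparing leading coefficients in the recursion gives by induction
\[
[T^{\alpha}]S_\alpha(T)=\frac{(-1)^{\alpha+1}}{\alpha!}.
\]

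For the Lagrange step, write $f(T):=S_{\alpha_1}(T)e^{-mT}$ so that the key quantity is $[x^m]f(T(x))$. Since $f'(T)e^{mT}=S_{\alpha_1}'(T)-m\,S_{\alpha_1}(T)$, Lagrange inversion gives
\[
[x^m]f(T(x))=\frac{1}{m}[T^{m-1}]\bigl(S_{\alpha_1}'(T)-m S_{\alpha_1}(T)\bigr)=[T^m]S_{\alpha_1}(T)-[T^{m-1}]S_{\alpha_1}(T),
\]
using $[T^{m-1}]S_{\alpha_1}'(T)=m\,[T^m]S_{\alpha_1}(T)$. Therefore
\[
[x^m]\mathsf{F}_{\alpha_1}=\frac{1}{m^2}\Bigl([T^m]S_{\alpha_1}(T)-[T^{m-1}]S_{\alpha_1}(T)\Bigr).
\]
Specializing to the top degree $m=1+\alpha_1$ predicted by Lemma \ref{yh2}, the first term vanishes on degree grounds and the second is known from Step 2, giving
\[
[x^{1+\alpha_1}]\mathsf{F}_{\alpha_1}=\frac{-1}{(1+\alpha_1)^2}\cdot\frac{(-1)^{\alpha_1+1}}{\alpha_1!}=\frac{(-1)^{\alpha_1}}{(1+\alpha_1)(1+\alpha_1)!},
\]
which is the claim. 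The degenerate case $\alpha_1=0$ falls out of the same formula with $S_0(T)=T/(1-T)$: both $[T^m]S_0$ and $[T^{m-1}]S_0$ equal $1$ for $m\geq 2$ (forcing the reassuring vanishing that recovers $\mathsf{F}_0=x$), while $m=1$ gives the leading coefficient $1$.

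The main obstacle is Step 1, the bookkeeping that turns the multi-index combinatorial sum defining $\mathsf{F}_{\alpha_1}$ into the tree-function generating series; the appearance of the factor $m^{k-2}$ from the bracket is exactly what is needed to absorb a $(-mT)^k/k!$ into an exponential, but getting the signs and powers of $m$ straight is the step requiring care. Once that is done, Steps 2--4 are essentially Lagrange inversion plus a one-line induction on the leading coefficient of $S_\alpha(T)$.
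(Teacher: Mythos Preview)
Your argument is correct. The bookkeeping in Step~1 checks out: expanding $\exp(-Z_1)Z_{\alpha_1}$ and applying $\lan\,\ran$ to each monomial $t_{j_1}\cdots t_{j_k}t_j$ with $j_1+\cdots+j_k+j=m$ produces exactly the factor $m^{k-2}$, and summing over $k$ assembles $\frac{1}{m^2}[x^m]\bigl(e^{-mT}S_{\alpha_1}\bigr)$ as you claim. The recursion $S_{\alpha+1}'(T)=\frac{1-T}{T}S_\alpha(T)$ and the leading-coefficient induction are correct, and the Lagrange step is clean because the exponential $e^{-mT}$ cancels against $\phi(T)^m=e^{mT}$, reducing everything to a polynomial in $T$. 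The case $\alpha_1=0$ with $S_0=T/(1-T)$ also goes through.

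The paper takes a completely different route: it identifies the leading coefficient of $\mathsf{F}_{\alpha_1}$ (up to sign) with the Gromov--Witten integral
\[
J_{1+\alpha_1}=\int_{\overline{M}_{0,1}(\proj^1,1+\alpha_1)}\rho_1\psi_1^{\alpha_1}\,c_{top}(\mathbb{R})
\]
via the same localization analysis used in Section~3, and then evaluates $J_k$ by the divisor equation and the topological recursion relation to obtain $J_k=\frac{1}{k\cdot k!}$. Your approach is purely combinatorial and self-contained, needing no geometry; as a bonus, your closed formula $[x^m]\mathsf{F}_{\alpha_1}=m^{-2}\bigl([T^m]S_{\alpha_1}-[T^{m-1}]S_{\alpha_1}\bigr)$ independently reproves the single-index case of Lemma~\ref{yh2} and recovers $\mathsf{F}_0=x$. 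The paper's route, on the other hand, reinforces the geometric theme---that these generating-function identities are shadows of relations among stable-map integrals---and fits naturally with the rest of the argument.
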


Lemma \ref{yh2} can be proven by various methods. A proof
via localization on moduli space is given in \cite{FP} in Section 1.7. \qed
\vspace{10pt}

Lemma \ref{yh3} is more interesting. 
The integral
\begin{equation}\label{hnkk}
J_{1+\alpha_1}=
\int_{\overline{M}_{0,1}(\proj^1,1+\alpha_1)} 
\rho_1 \psi_1^{\alpha_1} \ c_{top}(\mathbb{R})
\end{equation}
can be evaluated by exactly following{\footnote{The
equivariant lifts are taken just as in Section \ref{ecc}.}} 
the localization analysis of
Section \ref{pof3}.
We find 
\begin{equation*}
J_{1+\alpha_1}= (-1)^{\alpha_1}
\sum_{\Gamma}
\frac{1}{|\text{Aut}(\Gamma)|} \ 
 p_1^{-\alpha_1} \prod_{i=2}^{\ell} (-p_i)^{-1}
\prod_{i=1}^\ell \frac{p_i^{p_i}}{p_i!} \ \ (1+\alpha_1)^{\ell-3}\ 
\end{equation*}
where the sum is over all 1-pointed
comb graphs \eqref{alll} of total degree $1+\alpha_1$ .
We conclude  $J_{1+\alpha_1}$ equals, up to the
factor of $(-1)^{\alpha_1}$, the leading  $x^{1+\alpha_1}$
coefficient
of $\lan \exp(-Z_1) \cdot Z_{\alpha_1} \ran$.

To calculate the integral \eqref{hnkk}, we use well-known
equations in Gromov-Witten theory.
Certainly 
\begin{equation} \label{bnp}
J_1=1\ .
\end{equation}
By two applications of the divisor equation,
\begin{equation*}
k^2 J_k = \int_{\overline{M}_{0,3}(\proj^1,k)} 
\rho_1 \psi_1^{k-1} \rho_2 \rho_3 \ c_{top}(\mathbb{R})
\end{equation*}
By the topological recursion relation \cite{CK} applied to
the right side,
$$k^2 J_k =   \int_{\overline{M}_{0,2}(\proj^1,k-1)} 
\rho_1 \psi_1^{k-2} \rho_2  \ c_{top}(\mathbb{R})\ \cdot \
\int_{\overline{M}_{0,3}(\proj^1,1)} 
\rho_1  \rho_2 \rho_3 \  c_{top}(\mathbb{R}) \ .$$
We obtain the recursion
\begin{eqnarray*}
k^2 J_k & =& (k-1) J_{k-1} J_1 
\\ & = & (k-1) J_{k-1}
\end{eqnarray*}
which we can easily solve 
$$J_k = \frac{1}{k\cdot k!}\ $$
starting with the initial condition \eqref{bnp}.\qed

The case where the $\alpha$ data is empty will arise
naturally. We define
$$\mathsf{F}_{\emptyset} = \lan \exp(-Z_1) \ran .$$
The following result is derived from Lemma \ref{yh2} by the relation
$$x\frac{d}{dx} \mathsf{F}_{\emptyset} = - \mathsf{F}_0.$$

\begin{Lemma}\label{yh4}
$\mathsf{F}_{\emptyset} = 1-x .$
\end{Lemma}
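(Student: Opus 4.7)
\vspace{5pt}
\noindent\textbf{Proof proposal.} The plan is to combine the differential relation
$$x\frac{d}{dx}\mathsf{F}_{\emptyset} = -\mathsf{F}_0$$
stated in the excerpt with Lemmas \ref{yh2} and \ref{yh3} to pin down both sides completely, then integrate.

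First I would verify the displayed identity. Term-by-term differentiation gives
$$x\frac{d}{dx}Z_1 \,=\, \sum_{j>0} j\cdot x^j\, t_j\,\frac{j^{j-1}}{j!} \,=\, \sum_{j>0} x^j\, t_j\,\frac{j^j}{j!} \,=\, Z_0.$$
Because $\lan\,\cdot\,\ran$ is $\Q[[x]]$-linear, it commutes with $x\frac{d}{dx}$, so
$$x\frac{d}{dx}\mathsf{F}_{\emptyset} \,=\, \Lan x\tfrac{d}{dx}\exp(-Z_1)\Ran \,=\, \Lan \exp(-Z_1)\cdot(-Z_0)\Ran \,=\, -\mathsf{F}_0,$$
as claimed.

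Next I would pin down $\mathsf{F}_0$ completely. By Lemma \ref{yh2} applied with $m=1$ and $\alpha_1=0$, $\mathsf{F}_0$ is a polynomial in $x$ of degree at most $1$. By Lemma \ref{yh3}, its leading coefficient (in degree $1$) equals $\frac{(-1)^0}{1\cdot 1!}=1$. Finally, since $Z_0$ has no constant term in $x$, neither does $\exp(-Z_1)\cdot Z_0$, so $\mathsf{F}_0|_{x=0}=0$. These three facts force $\mathsf{F}_0 = x$ exactly.

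Substituting back, $x\frac{d}{dx}\mathsf{F}_{\emptyset} = -x$, hence $\frac{d}{dx}\mathsf{F}_{\emptyset} = -1$. Since $Z_1$ has no constant term in $x$, we have $\exp(-Z_1)|_{x=0}=1$, giving $\mathsf{F}_{\emptyset}|_{x=0}=\lan 1\ran =1$. Integrating then yields $\mathsf{F}_{\emptyset} = 1-x$, as desired. No step is a serious obstacle; the only place one really uses the nontrivial machinery is in invoking Lemma \ref{yh3} to nail down the coefficient of $x$ in $\mathsf{F}_0$, and in invoking Lemma \ref{yh2} to ensure no higher-degree terms are hiding.
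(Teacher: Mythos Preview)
Your proof is correct and follows essentially the same approach as the paper, which simply states that the result ``is derived from Lemma \ref{yh2} by the relation $x\frac{d}{dx}\mathsf{F}_{\emptyset} = -\mathsf{F}_0$.'' You have filled in the details the paper omits: verifying the differential identity via $x\frac{d}{dx}Z_1=Z_0$, invoking Lemma \ref{yh3} to pin down the $x$-coefficient of $\mathsf{F}_0$, and checking the initial condition $\mathsf{F}_{\emptyset}|_{x=0}=\lan 1\ran=1$.
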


\subsection{Product} \label{kjh}

We will now prove the basic identity
\begin{equation}\label{hh6}
\mathsf{X}_0(d) \cdot \mathsf{Y}_0(d) = \mathsf{L}_0(d)
\end{equation}
where $\mathsf{L}_0(d)$ is lower triangular with
diagonal entries all $\pm 1$.

We first address the special 
upper left corner. The product on the left
side of \eqref{hh6} is
$$
\mathsf{L}_0(d)[(1^d),(1^d)] =
(-1)^{d-1} d \cdot \frac{1}{d} = (-1)^{d-1}\ ,$$
a diagonal entry of the specified form.

Next assume $\mathbf{p}\neq (1^d)$.
Then, the matrix elements are
\begin{equation}\label{gttp}
\mathsf{L}_0(d)[ {\mathbf{p}}, {\mathbf{q}}]  = 
\frac{1}
{|\text{Aut}({\mathbf{\widehat{q}}})|}
 \sum_{\gamma}\  
\prod_{i=1}^{\ell({\mathbf{q}})} 
\text{Coeff}(F_{\gamma^{-1}(i)}, x^{q_i})\  q_i \ q_i!\ ,
\end{equation}
where the sum is over all functions
$$\gamma: \{ 1, \ldots,\ell({\mathbf{\widehat{p}}}) \} \rarr 
\{ 1, \ldots, \ell(\mathbf{q}) \}  \ . $$
In case
$\gamma^{-1}(i)= \{ {i[1]}, \ldots, {i[\ell_i]} \}$ is nonempty,
we define
$$\mathsf{F}_{\gamma^{-1}(i)} = \mathsf{F}_{\widehat{p}_{i[1]}-1,
\ldots, \widehat{p}_{i[\ell_i]}-1}\ .$$
If $\gamma^{-1}(i) = \emptyset$, then
$$\mathsf{F}_{\emptyset} = \lan \exp(-Z_1) \ran = 1 -x.$$
Equation \eqref{gttp} is obtained from a
simple unravelling of the definitions.

If $q_i>1$, $\text{Coeff}(F_{\gamma^{-1}(i)}, x^{q_i})$
vanishes unless
$\gamma^{-1}(i)$ is nonempty by Lemma \ref{yh4} and unless
\begin{equation}\label{bnyy}
q_i \leq 1 -\ell_i+ \sum_{j=1}^{\ell_i} \widehat{p}_{i[j]} \ 
\end{equation}
by Lemma \ref{yh2}.
Inequality \eqref{bnyy} for all parts $q_i>1$ implies
$$\ell(\mathbf{q}) \geq \ell(\mathbf{p}) \ .$$
Moreover, if equality of length holds, then inequality
\eqref{bnyy} implies either
$\mathbf{q}$ precedes $\mathbf{p}$
in the ordering of $P_0(d)$ or $\mathbf{q}= \mathbf{p}$.

We conclude the matrix 
$\mathsf{L}_0(d)$
is lower-triangular when the first coordinate
$\mathbf{p}$ is not $(1)^d$. The diagonal elements 
for $\mathbf{p}\neq (1^d)$
are
$$\mathsf{L}_0(d)[ {\mathbf{p}}, {\mathbf{p}}] = \prod_{i=1}^
{\ell(\widehat{\mathbf{p}})} (-1)^{\widehat{p}_i-1}
\cdot (-1)^{\ell({\mathbf{p}})- \ell(\widehat{\mathbf{p}})}$$
by Lemmas \ref{yh3} and \ref{yh4}.

To complete the proof of the lower-triangularity of $\mathsf{L}_0(d)$,
we must show the vanishing of 
$\mathsf{L}_0(d)[(1^d), \mathbf{q}\neq (1^d)]$.
The
matrix elements are
\begin{equation*}
\mathsf{L}_0(d)[ (1^d), {\mathbf{q}}\neq (1^d)]  = 
\frac{1}
{|\text{Aut}({\mathbf{\widehat{q}}})|}
 \sum_{\tilde{\gamma}}\  
\prod_{i=1}^{\ell({\mathbf{q}})} 
\text{Coeff}(\widetilde
{F}_{\tilde{\gamma}^{-1}(i)}, x^{q_i})\  q_i \ q_i!\ ,
\end{equation*}
where the sum is over all functions
$$\tilde{\gamma}: \{ 1 \} \rarr 
\{ 1, \ldots, \ell(\mathbf{q}) \}  \ . $$
In case
$\tilde{\gamma}^{-1}(i)= \{ 1 \}$ is nonempty,
we define
$$\widetilde{\mathsf{F}}_{\tilde{\gamma}^{-1}(i)} = \mathsf{F}_{0}\ .$$
If $\tilde{\gamma}^{-1}(i) = \emptyset$, then
$$\widetilde{\mathsf{F}}_{\emptyset} = \lan \exp(-Z_1) \ran = 1 -x.$$

Let $q_1>1$ be the largest part of $\mathbf{q}$. Then
$$\text{Coeff}(\widetilde{F}_{\tilde{\gamma}^{-1}(1)}, x^{q_1})=0$$
by Lemmas \ref{yh2} and \ref{yh4}. Hence,
$$\mathsf{L}_0(d)[(1^d), \mathbf{q}\neq (1^d)] =0,$$
and the lower-triangularity of $\mathsf{L}_0(d)$
is fully proven.

The proof of Proposition \ref{nlll} is complete.
Following the implications back, the proof of Theorem 1
is also complete. \qed

Since we know explicitly the diagonal elements of the
triangular matrices $\mathsf{Y}_0(d)$ and $\mathsf{L}_0(d)$,
the product 
$$\mathsf{X}_0(d) \cdot \mathsf{Y}_0(d) = \mathsf{L}_0(d)$$
yields a simple formula for the determinant,
$$\text{det}( \mathsf{X}_{0,d} ) = (-1)^{d-1}\prod_{\mathbf{p}\in 
P_0(d)\setminus\{(1^d)\}} \left(
\frac{|\text{Aut}(\widehat{\mathbf{p}})|}
{\prod_{i=1}^{\ell(\mathbf{p})} 
p_i^{p_i-2}}
\
(-1)^{\ell({\mathbf{p}})} 
\prod_{i=1}^
{\ell(\widehat{\mathbf{p}})} (-1)^{\widehat{p}_i} \right)
\ .$$

\section{Gorenstein conjecture}
\subsection{Proof of Theorem \ref{nmmg}}
\label{lss}
If $n>0$, the pairing 
$$\kappa^d(M_{g,n}^c) \times R^{2g-3+n-d}(M_{g,n}^c) \rarr \mathbb{Q}$$
is shown to have rank at least 
$|P(d, 2g-2+n-d)|$ in \cite{MOP}. Since 
$$\text{dim}_{\mathbb{Q}} \kappa^d(M_{g,n}^c) = |P(d, 2g-2+n-d)|$$
by Theorem 1 and \cite{MOP}, Theorem \ref{nmmg} follows. \qed

\subsection{Further directions} \label{lsss}
Perhaps the universality of Theorem 1 extends
to larger subrings of $R^*(M_{g,n}^c)$. A natural
place to start is the ring
$$S^*(M_{g,n}^c) \subset R^*(M_{g,n}^c)$$
generated by all the $\kappa$ and $\psi$ classes.

\begin{Question}
Is $S^*(M_{g,n}^c)$ canonically a subring of $S^*(M_{0,2g+n}^c)$ ?
\end{Question}

At least the condition $n>0$ must be imposed in Question 2.
How to include the strata classes in a universality statement
is not clear.

\vspace{+8 pt}
\noindent
Department of Mathematics\\
Princeton University\\
rahulp@math.princeton.edu.

\end{document}